\documentclass[11pt]{article}

\usepackage{amsfonts}
\usepackage{latexsym}
\usepackage{amsmath}
\usepackage{amssymb}
\usepackage{amsthm}
\usepackage{amsfonts, color}

%
%
%
%
%
%


\newtheorem{theorem}{Theorem}[section]
\newtheorem{corollary}[theorem]{Corollary}
\newtheorem{lemma}[theorem]{Lemma}
\newtheorem{proposition}[theorem]{Proposition}

\newtheorem{definition}[theorem]{Definition} 
\newtheorem{remark}[theorem]{Remark}


\numberwithin{equation}{section}

\newcommand{\IR}{{\mathbb R}}

\newcommand{\IE}{{\mathbb E}}

\newcommand{\inner}[2]{ \ensuremath { \langle {#1},{#2} \rangle } }


\begin{document}

\title{An inequality for moments of log-concave functions on Gaussian random vectors.}

\author{Nikos Dafnis\thanks{The author is supported by 
        the ERC Starting-Grant CONC-VIA-RIEMANN no. 637851.} 
        \ and 
        Grigoris Paouris\thanks{The author  is supported by 
        the US NSF grant CAREER-1151711 and BSF grant 2010288.}}
   
\date{}

\maketitle

\begin{abstract}
 We prove a sharp moment inequality for a log-concave or a log-convex function, 
 on Gaussian random vectors. As an application we take a stability result for the 
 classical logarithmic Sobolev inequality of L. Gross in the case where the function 
 is log-concave. 
\end{abstract}

\section{Introduction and main results}

\noindent A non-negative function $f:{\mathbb R}^k\rightarrow [0,+\infty)$
is called  \textit{$log$-concave on its support}, if and only if
\[
f\big((1-\lambda) x + \lambda y \big) \geq f(x)^{(1-\lambda)} f(y)^\lambda. 
\]
for every $\lambda \in [0,1]$ and $x, y \in {\rm supp}(f)$. Respectively, 
is called \textit{$\log$-convex on its support}, if nd only if  
\[ 
f\big((1-\lambda) x + \lambda y \big) \leq f(x)^{(1-\lambda)}f(y)^\lambda . 
\]
for every $\lambda \in [0,1]$ and $x, y \in {\rm supp}(f)$.
The aim of this note is to present a sharp inequality for Gaussian moments 
of a log-concave or a log-convex function, stated below as Theorem \ref{thm.sqrt-moments}. 

We work on ${\mathbb R}^k$, equipped with the standard scalar product $\langle\cdot ,\cdot\rangle$. 
We denote by $|\cdot|$, the corresponding Euclidean norm
and the absolute value of a real number. 
We additionally use the notation $X\sim N(\xi, T)$, if $X$ is a Gaussian random vector in 
${\mathbb R}^k$, with expectation $\xi\in{\mathbb R}^k$ and covariance the $k \times k$ 
positive semi-definite matrix $T$.
We say that $X$ is centered, whenever ${\mathbb E}X=0$, and that $X$ is a standard Gaussian 
random vector if it is centered with covariance matrix the identity in ${\mathbb R}^k$, where 
in that case $\gamma_k$ stands for its distribution law. Finally, ${\cal L}^{p,s}(\gamma_k)$ 
stand for the class of all functions $f\in L^p(\gamma_k)$ whose partial derivatives up to order s, 
are also in $L^p(\gamma_k)$. 


\begin{theorem}\label{thm.sqrt-moments}
Let $k\in{\mathbb N}$, $f:{\mathbb R}^k\rightarrow [0,+\infty)$ be a $\log$-concave, 
$g:{\mathbb R}^k\rightarrow [0,+\infty)$ be a $\log$-convex function,
and $X$ be Gaussian random vector in ${\mathbb R}^k$. Then,
\begin{itemize}
 \item[(i)]  for every  $r\in [0,1]$
             \begin{equation} \label{eq.sqrt-moments-1}
               {\mathbb E}f\big(\sqrt{r}X\big) \geq \left({\mathbb E}f(X)^r\right)^{\frac{1}{r}}
                \quad {\rm and} \quad
               {\mathbb E}g\big(\sqrt{r}X\big) \leq \left({\mathbb E}g(X)^r\right)^{\frac{1}{r}},
             \end{equation}
 \item[(ii)] for every $q\in[1,+\infty)$
             \begin{equation} \label{eq.sqrt-moments-2}
               {\mathbb E}f\big(\sqrt{q}X\big) \leq \left({\mathbb E}f(X)^q\right)^{\frac{1}{q}}
               \quad {\rm and} \quad
               {\mathbb E}g\big(\sqrt{q}X\big) \geq \left({\mathbb E}g(X)^q\right)^{\frac{1}{q}}.
             \end{equation}
\end{itemize}
In any case, equality holds if $r=1=q$ or if $f(x)= g(x)=e^{-\langle {\rm a}, x\rangle + c}$,
where ${\rm a}\in{\mathbb R}^k$ and $c\in{\mathbb R}$.
\end{theorem}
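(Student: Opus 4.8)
The plan is to reduce everything to a monotonicity statement for a single function of one real variable.

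\smallskip
\noindent\emph{Step 1: reductions.} We may assume $X=Z\sim\gamma_k$ is standard: for a centered Gaussian $X=TZ$ with $T$ a matrix square root of the covariance one has $\phi(\sqrt\lambda X)=(\phi\circ T)(\sqrt\lambda Z)$ and $\mathbb E\phi(X)^\lambda=\mathbb E(\phi\circ T)(Z)^\lambda$, and $\phi\circ T$ is again log-concave, resp. log-convex. For $\phi\in\{f,g\}$ set
\[
\Psi_\phi(\lambda):=\log\mathbb E\,\phi(\sqrt\lambda Z)-\tfrac1\lambda\log\mathbb E\,\phi(Z)^\lambda ,\qquad \lambda>0 .
\]
Then $\Psi_\phi(1)=0$, and \eqref{eq.sqrt-moments-1}--\eqref{eq.sqrt-moments-2} say exactly that $\Psi_f\ge 0$ on $(0,1]$ and $\Psi_f\le0$ on $[1,\infty)$, while $\Psi_g$ has the opposite signs. (As $\lambda\downarrow0$, $\mathbb E\phi(\sqrt\lambda Z)\to\phi(0)$ and $\tfrac1\lambda\log\mathbb E\phi(Z)^\lambda\to\mathbb E\log\phi(Z)$, so Jensen's inequality for the concave, resp. convex, function $\log\phi$ forces $\Psi_f(0^+)\ge0\ge\Psi_g(0^+)$; this is the $r=0$ endpoint.) Hence it suffices to prove that $\Psi_f$ is non-increasing and $\Psi_g$ is non-decreasing on $(0,\infty)$.

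\smallskip
\noindent\emph{Step 2: differentiation.} Write $f=e^{-u}$ with $u$ convex, $\Lambda(\lambda):=\log\mathbb E e^{-\lambda u(Z)}$, and let $\mu_\lambda$, $\nu_\lambda$ be the log-concave probability measures with densities proportional to $e^{-u(y)-|y|^2/(2\lambda)}$ and to $e^{-\lambda u(z)-|z|^2/2}$, respectively; let $\mathbb E_{\mu_\lambda}$ denote expectation under $\mu_\lambda$. Differentiating (after the change of variable $y=\sqrt\lambda z$ in the first term) gives
\[
\lambda^{2}\,\Psi_f'(\lambda)=\tfrac12\Big(\mathbb E_{\mu_\lambda}|Y|^{2}-k\lambda\Big)-\big(\lambda\Lambda'(\lambda)-\Lambda(\lambda)\big).
\]
Since $\Lambda$ is a cumulant generating function, hence convex, and since $\tfrac{d\nu_\lambda}{d\gamma_k}=e^{-\lambda u-\Lambda(\lambda)}$ yields
\[
\lambda\Lambda'(\lambda)-\Lambda(\lambda)=\operatorname{Ent}\!\big(\nu_\lambda\,\|\,\gamma_k\big)\qquad(\ge0,\ \text{and }=0\text{ iff }u\text{ is affine}),
\]
the monotonicity $\Psi_f'\le0$ is equivalent to the pointwise estimate
\[
\tfrac12\Big(\mathbb E_{\mu_\lambda}|Y|^{2}-k\lambda\Big)\ \le\ \operatorname{Ent}\!\big(\nu_\lambda\,\|\,\gamma_k\big),\qquad \lambda>0 .
\]

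\smallskip
\noindent\emph{Step 3: the core estimate and equality.} Because $u$ is convex, the potential $u(y)+|y|^2/(2\lambda)$ of $\mu_\lambda$ has Hessian $\succeq\lambda^{-1}I$, so Brascamp--Lieb gives $\operatorname{Cov}(\mu_\lambda)\preceq\lambda I$ and hence $\mathbb E_{\mu_\lambda}|Y|^{2}-k\lambda=|\mathbb E_{\mu_\lambda}Y|^{2}+\big(\operatorname{tr}\operatorname{Cov}(\mu_\lambda)-k\lambda\big)\le|\mathbb E_{\mu_\lambda}Y|^{2}$; thus it is enough to bound the barycenter of the Gaussian tilted by $e^{-u}$ by the relative entropy of the Gaussian tilted by $e^{-\lambda u}$, namely $|\mathbb E_{\mu_\lambda}Y|^{2}\le 2\operatorname{Ent}(\nu_\lambda\,\|\,\gamma_k)$. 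This is the step I expect to be the main obstacle: the two tilted measures are built from $u$ in genuinely different ways, and the estimate must be sharp (the Gaussian Pinsker bound $2\operatorname{Ent}(\nu_\lambda\,\|\,\gamma_k)\ge|\mathbb E_{\nu_\lambda}Z|^{2}$ alone is too weak), so I would prove it by combining the subgradient inequality for $u$ with a Donsker--Varadhan/Cauchy--Schwarz comparison of $\mu_\lambda$ and $\nu_\lambda$. Every inequality above is an equality when $u$ is affine, since then $\mu_\lambda=N(-\lambda\nabla u,\lambda I)$ and $\nu_\lambda=N(-\lambda\nabla u,I)$; tracking these equality conditions through Steps~1--3 shows that equality in \eqref{eq.sqrt-moments-1} or \eqref{eq.sqrt-moments-2} (for $r,q\neq1$) forces $u$ affine, i.e.\ $f(x)=e^{-\langle\mathrm a,x\rangle+c}$, the case $r=1=q$ being trivial. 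The statements for the log-convex $g$ follow by running the same scheme with $u$ replaced by the concave function $-\log g$, which reverses the Hessian comparison and hence all the inequalities of Steps~2--3, giving $\Psi_g'\ge0$.
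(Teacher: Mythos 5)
Your Steps 1--2 are sound: the identities
\[
\lambda^{2}\Psi_f'(\lambda)=\tfrac12\big(\mathbb{E}_{\mu_\lambda}|Y|^{2}-k\lambda\big)-\mathrm{Ent}(\nu_\lambda\|\gamma_k),
\qquad
\lambda\Lambda'(\lambda)-\Lambda(\lambda)=\mathrm{Ent}(\nu_\lambda\|\gamma_k)
\]
both check out, and this differentiation-in-$\lambda$ scheme is genuinely different from the paper's route, which never differentiates: there the inequality is proved directly for $r=\frac{(n-1)t+1}{n}$ by applying the Gaussian Brascamp--Lieb inequality of Theorem \ref{thm.CDP} to $n$ exchangeable $t$-correlated copies of $X$ (constructed from a regular simplex) together with Barthe's inequality for the associated decomposition of the identity, which covers all $r\in(0,1]$ as $n$ and $t$ vary; the case $q\ge1$ then follows by the substitution $F(x)=f(x/\sqrt r)^{1/r}$.

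However, Step 3 contains a genuine gap, and not merely because the ``core estimate'' is left as an acknowledged obstacle with only a one-line plan. The reduction you propose is to a statement that is false. After discarding the nonpositive term $\mathrm{tr}\,\mathrm{Cov}(\mu_\lambda)-k\lambda$ you ask for $|\mathbb{E}_{\mu_\lambda}Y|^{2}\le 2\,\mathrm{Ent}(\nu_\lambda\|\gamma_k)$. Take $k=1$ and $u(y)=M\max(y,0)$ with $M$ large: then $\mu_\lambda$ approaches $N(0,\lambda)$ conditioned on $\{y<0\}$, so $|\mathbb{E}_{\mu_\lambda}Y|^{2}\to 2\lambda/\pi$, while $\nu_\lambda$ approaches $\gamma_1$ conditioned on $\{z<0\}$, whose relative entropy is $\log 2$; for $\lambda>\pi\log 2$ the proposed inequality fails. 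The target inequality $\tfrac12(\mathbb{E}_{\mu_\lambda}|Y|^{2}-k\lambda)\le\mathrm{Ent}(\nu_\lambda\|\gamma_k)$ survives in this example only because the discarded variance deficit equals $-2\lambda/\pi$ and exactly cancels $|\mathbb{E}_{\mu_\lambda}Y|^{2}$; so the barycenter and the variance deficit of $\mu_\lambda$ must be estimated jointly against the entropy of the differently tilted measure $\nu_\lambda$, and no argument for that joint estimate is given (a Donsker--Varadhan or subgradient comparison of $\mu_\lambda$ with $\nu_\lambda$ is named but not carried out). As written, the proof is incomplete at its central step, and the indicated way of completing it cannot work; the same issue propagates to the equality analysis and to the log-convex case, which are derived by ``tracking'' Step 3.
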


\smallskip

In section \ref{sec.2} we prove theorem \ref{thm.sqrt-moments}. In the main 
step of the proof, which is summarized in proposition \ref{prop.log-moments},  
we combine techniques from \cite{CDP} along with Barthe's inequality \cite{B}.

In section \ref{sec.entropy}, we prove a stability type result for the logarithmic 
Sobolev inequality. Let $X$ be a random vector in ${\mathbb R}^k$. Define the entropy 
of a function $f\in L(X)$, with respect to $X$, as
\[
{\rm Ent}_X(f) := {\mathbb E}|f(X)|\log |f(X)| - {\mathbb E}|f(X)|\, \log {\mathbb E}|f(X)|,
\]
provided that the expectations make sense. The Logarithmic Sobolev inequality, 
proved by L. Gross in \cite{Gross}, states that if $X \sim N(0,I_n)$, then 
\begin{equation} \label{eq.Log-Sob}
 {\rm Ent}_X(|f|^2) \leq 2\,{\mathbb E} |\nabla f(X)|^2 
\end{equation}
for every function $f\in L^2(\gamma_k)$. Of course we may state this for $f\geq 0$ without 
loss of generality. Moreover, Carlen proved in \cite{Carlen}, that equality holds if and only 
if $f$ is an exponential function.
For more details about the logarithmic Sobolev inequality we refer the 
reader to \cite{Bog}, \cite{LL}, \cite{Vill1}, \cite{Vill2} and to the references therein.

Theorem \ref{thm.sqrt-moments}, after an application of the Gaussian integration by parts formula
(see lemma \ref{IbP}), 
leads us to the following sharp, quantitative stability result for Gross' inequality, when the function 
is log concave.

\begin{theorem}\label{thm.Stab-Log-Sob}
 Let $X$ be a standard Gaussian random vector in ${\mathbb R}^k$ and $f=e^{-v}\in{\cal L}^{2,1}(\gamma_k)$,
 where $v:{\mathbb R}^k \rightarrow {\mathbb R}$ is a convex function (on its support). Then
 \begin{equation}\label{eq.quan-stab-L-S}
 2\,{\mathbb E}|\nabla f(X)|^2 - {\mathbb E}f(X)^2 \Delta v(X)
 \leq {\rm Ent}_X(f^2) \leq 2\,{\mathbb E} |\nabla f(X)|^2
\end{equation}
\end{theorem}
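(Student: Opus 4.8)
The plan is to derive Theorem \ref{thm.Stab-Log-Sob} from Theorem \ref{thm.sqrt-moments} by differentiating the moment inequalities at the point $r=1=q$, and then translating the resulting differential inequality into entropy and Fisher-information language via Gaussian integration by parts. Write $f=e^{-v}$ with $v$ convex, so $f$ is log-concave, and consider the function
\[
 \Phi(t) := \log {\mathbb E}f\big(\sqrt{t}X\big), \qquad t>0,
\]
together with
\[
 \Psi(t) := \frac{1}{t}\log {\mathbb E}f(X)^t .
\]
Part (i) of Theorem \ref{thm.sqrt-moments} says that for $r\in[0,1]$ we have ${\mathbb E}f(\sqrt r X)\ge ({\mathbb E}f(X)^r)^{1/r}$, i.e. $\Phi(r)\ge \Psi(r)$, while part (ii) gives $\Phi(q)\le \Psi(q)$ for $q\ge 1$; since $\Phi(1)=\Psi(1)=\log{\mathbb E}f(X)$, the point $t=1$ is a crossing point and comparing one-sided derivatives there yields $\Phi'(1)=\Psi'(1)$ if both are differentiable, or more robustly $\Phi'(1^-)\ge \Psi'(1^-)$ and $\Phi'(1^+)\le \Psi'(1^+)$. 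The cleaner route, which I would actually carry out, is to note that $\Phi(t)-\Psi(t)$ has a local extremum (indeed a sign change of the right type) at $t=1$, hence, wherever the derivative exists, $\tfrac{d}{dt}\big(\Phi-\Psi\big)(1)=0$, equivalently $\Phi'(1)=\Psi'(1)$; but in fact we only need the two inequalities $\Phi'(1)\le\Psi'(1)$ coming from (i) and $\Phi'(1)\ge\Psi'(1)$ from (ii), which is exactly enough to pin both sides of \eqref{eq.quan-stab-L-S}.

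Next I would compute the two derivatives at $t=1$ explicitly. For $\Psi$, differentiating $t\mapsto \tfrac1t\log{\mathbb E}f(X)^t = \tfrac1t\log {\mathbb E}e^{-tv(X)}$ and evaluating at $t=1$ gives, after normalizing ${\mathbb E}f(X)^{?}$ appropriately, a quantity proportional to ${\rm Ent}_X(f^2)$ — this is the standard identity $\tfrac{d}{dt}\big|_{t=1}\tfrac1t\log{\mathbb E}f^t = \tfrac1{{\mathbb E}f}\big({\mathbb E}f\log f - {\mathbb E}f\log{\mathbb E}f\big)/({\mathbb E}f)$ type computation; care with the $f$ versus $f^2$ normalization will be needed, but it is routine. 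For $\Phi$, differentiating $t\mapsto \log{\mathbb E}f(\sqrt t X)$ under the integral sign produces $\tfrac{1}{2\sqrt t}\,{\mathbb E}\langle \nabla f(\sqrt t X),X\rangle / {\mathbb E}f(\sqrt t X)$, and at $t=1$ this is $\tfrac12 {\mathbb E}\langle \nabla f(X),X\rangle/{\mathbb E}f(X)$. Now apply the Gaussian integration by parts formula (lemma \ref{IbP}): ${\mathbb E}\langle \nabla f(X),X\rangle = {\mathbb E}\Delta f(X)$, and with $f=e^{-v}$ one has $\Delta f = (|\nabla v|^2-\Delta v)f$, while $|\nabla f|^2 = |\nabla v|^2 f^2$; applying integration by parts once more to ${\mathbb E}|\nabla f(X)|^2 = {\mathbb E}\langle \nabla f, \nabla f\rangle$ relates ${\mathbb E}|\nabla f|^2$ to ${\mathbb E}f\Delta f$ and hence to ${\mathbb E}f^2(|\nabla v|^2 - \Delta v)$. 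Assembling these identities converts the derivative inequality $\Phi'(1)\lessgtr\Psi'(1)$ into precisely
\[
 2\,{\mathbb E}|\nabla f(X)|^2 - {\mathbb E}f(X)^2\,\Delta v(X) \;\le\; {\rm Ent}_X(f^2) \;\le\; 2\,{\mathbb E}|\nabla f(X)|^2,
\]
the right inequality being Gross' inequality itself (which also follows from the $q\ge1$ side), and the left being the new lower bound.

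I expect the main obstacle to be the rigorous justification of differentiating under the integral sign and of passing from the integrated inequalities of Theorem \ref{thm.sqrt-moments} to the derivative inequality at $t=1$: one must verify dominated-convergence hypotheses for the families $x\mapsto f(\sqrt t X)$ and $x\mapsto f(X)^t$ near $t=1$, which requires integrability of $v(X)$, $|\nabla v(X)|$, $|X|$ against the relevant weights — this is where the hypothesis $f\in{\cal L}^{2,1}(\gamma_k)$ and log-concavity (which gives the needed polynomial/exponential decay and one-sided bounds on $v$) enter. A convenient way to sidestep delicate smoothness issues is to prove the inequality first for smooth, compactly-supported-type or bounded-Hessian approximations of $v$, establish the displayed inequality there, and then pass to the limit; log-concavity is preserved under the natural approximations (e.g. $v_\varepsilon = v * \text{mollifier} + \varepsilon|x|^2$ adjusted), and both sides of \eqref{eq.quan-stab-L-S} are continuous along such approximations under the stated integrability. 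The finite-dimensional computations with $\Delta v$ and $|\nabla v|^2$ and the two integration-by-parts steps are otherwise entirely mechanical. Finally, the equality case $f=e^{-\langle a,x\rangle+c}$ (affine $v$, so $\Delta v\equiv 0$) reduces both bounds in \eqref{eq.quan-stab-L-S} to the same value $2\,{\mathbb E}|\nabla f(X)|^2$, recovering Carlen's equality case and confirming sharpness.
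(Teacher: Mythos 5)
Your overall route is the same as the paper's: compare the two functions $q\mapsto\big({\mathbb E}f(X)^q\big)^{1/q}$ and $q\mapsto{\mathbb E}f(\sqrt qX)$ at their crossing point $q=1$, identify the derivatives there with ${\rm Ent}_X(f)$ and $\tfrac12{\mathbb E}\langle X,\nabla f(X)\rangle$ respectively, convert the latter to $\tfrac12{\mathbb E}\Delta f(X)$ by Gaussian integration by parts, apply the result to $f^2$, use the chain rule for $f=e^{-v}$, and finish with an approximation argument (the paper truncates to $f\,{\bf 1}_{nB_2^k}$; your mollification variant is equally workable). This is exactly Proposition \ref{cor-main1} plus Corollary \ref{cor-main2} in the paper.

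However, there is a concrete error in your derivative bookkeeping. Since $\Phi-\Psi\ge 0$ on $[0,1]$ and $\Phi-\Psi\le 0$ on $[1,\infty)$ with $(\Phi-\Psi)(1)=0$, the difference quotients give $(\Phi-\Psi)'(1^-)\le 0$ \emph{and} $(\Phi-\Psi)'(1^+)\le 0$: both parts of Theorem \ref{thm.sqrt-moments} yield the \emph{same} one-sided inequality $\Phi'(1)\le\Psi'(1)$, not opposite ones. Your claims that $\Phi'(1^-)\ge\Psi'(1^-)$ and that part (ii) gives $\Phi'(1)\ge\Psi'(1)$ are sign errors; if both directions really held you would be forced into equality, contradicting the strict gap the theorem allows. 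Consequently the right-hand inequality of \eqref{eq.quan-stab-L-S} does \emph{not} ``also follow from the $q\ge1$ side'': the derivative comparison only produces the new lower bound $2\,{\mathbb E}|\nabla f(X)|^2-{\mathbb E}f(X)^2\Delta v(X)\le{\rm Ent}_X(f^2)$, while the upper bound must be imported wholesale as Gross's logarithmic Sobolev inequality \eqref{eq.Log-Sob} — which is also what the paper does, and which you do correctly identify in the same breath. With that correction the argument closes; the equality discussion for affine $v$ is fine.
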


\medskip

\noindent {\bf Acknowledgement} Part of this work was done while the first named author was a 
postdoctoral research fellow at the Department of Mathematics at the University of Crete, and 
he was supported by the Action “Supporting Postdoctoral Researchers” of the Operational Program 
“Education and Lifelong Learning” (Action’s Beneficiary: General Secretariat for Research and 
Technology), co-financed by the European Social Fund (ESF) and the Greek State.

\section{Proof of the main result}\label{sec.2}

 The first main tool in the proof theorem \ref{thm.sqrt-moments} is the following inequality 
for Gaussian random vectors, proved in \cite{CDP}. Recall that for two $N\times N$ matrices $A$ 
and $B$, we say that $A\leq B$ if and only if $B-A$ is positive semi-definite.

\begin{theorem}\label{thm.CDP}
Let $m,n_1,\ldots,n_m\in{\mathbb N}$ and set $N=\sum_{i=1}^m n_i$. For every $1\leq i\leq m$, 
let $X_i$ be a Gaussian random vector in ${\mathbb R}^{n_i}$, such that ${\bf X}:=(X_1,\ldots,X_m)$,
is a Gaussian random vector in ${\mathbb R}^{N}$ with covariance the $N \times N$ matrix 
$T=(T_{ij})_{1\leq i,j\leq m}$, where $T_{ij}$ is the covariance matrix 
between $X_i$ and $X_j$ for $1\leq i,j\leq m$. Let $P$ be the block diagonal matrix,
\[
 P={\rm diag}(p_1T_{11},\ldots,p_mT_{mm}).
\]
Then for any set of nonnegative measurable 
functions $f_i$ on $\mathbb{R}^{n_i}$, $1\leq i \leq m$, 
 \begin{itemize}
  \item[$(i)$]  If $T\leq P$, then
              \begin{equation}\label{eq.CDP-up}
                \IE \prod_{i=1}^m f_i(X_i)\leq
                \prod_{i=1}^m \Big(\IE f_i(X_i)^{p_i}\Big)^{\frac{1}{p_i}}.
              \end{equation}
  \item[$(ii)$] If $T\geq P$, then
              \begin{equation}\label{eq.CDP-Down}
                \IE \prod_{i=1}^m f_i(X_i)\geq
                \prod_{i=1}^m \Big(\IE f_i(X_i)^{p_i}\Big)^{\frac{1}{p_i}}.
              \end{equation}
 \end{itemize}
\end{theorem}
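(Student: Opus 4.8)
The plan is to regard Theorem~\ref{thm.CDP} as a multilinear, multidimensional version of Gaussian hypercontractivity: for $m=2$, $n_1=n_2=1$ the condition $T\le P$ reads $(p_1-1)(p_2-1)\ge\rho^2$, which is exactly Nelson's condition for the two-function bound $\IE[f_1(X_1)f_2(X_2)]\le\|f_1\|_{p_1}\|f_2\|_{p_2}$. By truncation and mollification one first reduces to $f_i$ that are smooth, bounded and bounded below on compacts; and, perturbing $T\rightsquigarrow T+\varepsilon I$, to $T$ positive definite. This perturbation preserves both hypotheses, because $T\le P$ forces $p_i\ge1$ (restrict the form $\langle(P-T)x,x\rangle\ge0$ to the $i$-th block) and $T\ge P$ forces $p_i\le1$, while the $i$-th diagonal block of $P$ is shifted by $\varepsilon p_iI$, so $(P-T)+\varepsilon(\mathrm{diag}(p_iI)-I)\ge0$ in case $(i)$ and $(T-P)+\varepsilon(I-\mathrm{diag}(p_iI))\ge0$ in case $(ii)$.

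The route I would try first is a heat-semigroup/interpolation argument. Write $T_0=\mathrm{diag}(T_{11},\dots,T_{mm})$ and let $P^{(i)}_t$ be the Ornstein--Uhlenbeck semigroup on $\IR^{n_i}$ with invariant law $N(0,T_{ii})$. The quantity
\[
\Phi(t)\ :=\ \IE_{N(0,T)}\Big[\textstyle\prod_{i=1}^m \big(P^{(i)}_t(f_i^{p_i})\big)^{1/p_i}(X_i)\Big]
\]
interpolates between $\Phi(0)=\IE\prod_i f_i(X_i)$ and $\Phi(\infty)=\prod_i(\IE f_i(X_i)^{p_i})^{1/p_i}$, so it suffices to prove $\Phi'\ge0$ in case $(i)$ and $\Phi'\le0$ in case $(ii)$. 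Writing $u_i=P^{(i)}_t(f_i^{p_i})$, $v_i=\log u_i$, using $\partial_t u_i=L_iu_i$ and Gaussian integration by parts against $N(0,T)$, one expands $\Phi'(t)$ into a manifestly nonnegative ``diagonal'' part (coefficients $\tfrac{p_i-1}{p_i^2}\ge0$, as in the semigroup proof of the logarithmic Sobolev inequality) plus ``transport'' terms carrying the Hessians $\nabla^2 v_i$ and the off-diagonal blocks of $T^{-1}$. The heart of the matter is to show that, after collecting everything, $\Phi'(t)$ is the integral against the positive measure $\prod_i u_i^{1/p_i}\,dN(0,T)$ of a quadratic form in $(\nabla v_1,\dots,\nabla v_m)$ (and the positions) whose definiteness is controlled precisely by the sign of $P-T$; the block-diagonal structure of $P$ is what makes the diagonal terms available to absorb the cross terms. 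This pointwise quadratic-form inequality — the only place where the hypothesis on $T$ and $P$ is genuinely used — is the step I expect to be the real obstacle, and I would attack it with a Schur-complement/Cauchy--Schwarz estimate.

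An alternative, more black-box route is to reduce to Gaussian inputs. Realizing $X_i=\Pi_iT^{1/2}Z$ with $Z$ standard Gaussian on $\IR^N$ writes $\IE\prod_i f_i(X_i)=\int_{\IR^N}\prod_i f_i(B_iz)\,d\gamma_N(z)$ with $B_iB_j^{\ast}=T_{ij}$, i.e.\ as a Gaussian Brascamp--Lieb functional with exponents $1/p_i$, the right-hand side being a product of Gaussian $L^{p_i}$-norms. By Lieb's theorem (extremizers of Brascamp--Lieb inequalities are Gaussian) in case $(i)$, and by Barthe's reverse Brascamp--Lieb inequality \cite{B} in case $(ii)$, it is then enough to test on $f_i(x)=e^{-\frac12\langle S_ix,x\rangle}$ with $S_i$ symmetric positive semidefinite. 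For such inputs all integrals are Gaussian, and with $\hat S=\mathrm{diag}(S_1,\dots,S_m)$ the claim becomes the determinant inequality
\[
\det\big(I_N+T\hat S\big)\ \ge\ \prod_{i=1}^m \det\big(I_{n_i}+p_iT_{ii}S_i\big)^{1/p_i}
\]
in case $(i)$ and its reverse in case $(ii)$; both sides equal $1$ at $\hat S=0$ ($f_i$ constant), matching the exponential equality case. One proves this using $T\le P$ (resp.\ $T\ge P$), for instance by differentiating along $\hat S\mapsto s\hat S$, $s\in[0,1]$, which reduces it to a trace inequality at each $s$, or by induction on $m$ from the $m=2$ block case, and reads off the equality discussion ($p_i=1$, or $S_i$ supported where $P-T$ degenerates, i.e.\ $f_i$ log-affine). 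On either route the genuine work — and the only use of the matrix hypothesis — is this last inequality.
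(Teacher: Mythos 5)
First, a point of order: this paper contains no proof of Theorem \ref{thm.CDP}. The result is imported as a black box from \cite{CDP} (``the following inequality for Gaussian random vectors, proved in \cite{CDP}''), and is only ever applied in the special exchangeable case $T_{ii}=I_k$, $T_{ij}=tI_k$ recorded as Theorem \ref{thm.CDP-t}. So there is no in-paper argument to compare yours against; you are reconstructing the main theorem of a separate paper.

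Judged on its own terms, your proposal is a sensible road map but not a proof: in both routes the single step in which the hypothesis $T\le P$ (resp.\ $T\ge P$) is actually used is explicitly deferred. In Route 1, the skeleton is fine ($\Phi(0)=\IE\prod_i f_i(X_i)$, $\Phi(\infty)=\prod_i(\IE f_i(X_i)^{p_i})^{1/p_i}$, and the preliminary reductions $p_i\ge 1$, $T\rightsquigarrow T+\varepsilon I$ all check out), but the entire content of the theorem is the claim that $\Phi'(t)$ organizes into a quadratic form whose sign is governed by $P-T$; you name this ``the real obstacle'' and do not carry out the computation, so nothing is established. In Route 2, the reduction to Gaussian inputs is legitimate for part $(i)$ via Lieb's theorem, but for part $(ii)$ the citation is wrong: Barthe's theorem \cite{B} concerns the sup-convolution functional $\int\sup\{\prod_i f_i(\xi_i)^{c_i}:\sum_i c_iU_i^*\xi_i=x\}\,dx$, not the minimization of $\IE\prod_i f_i(X_i)/\prod_i(\IE f_i(X_i)^{p_i})^{1/p_i}$ with $p_i\le 1$; the statement you actually need (``positive Gaussian kernels have Gaussian minimizers'') is a different and substantially later result of Barthe and Wolff, or must be proved directly. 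Even granting both reductions, the closing determinant inequality $\det(I_N+T\hat S)\ge\prod_i\det(I_{n_i}+p_iT_{ii}S_i)^{1/p_i}$ is, by your own admission, ``the genuine work'': differentiating along $s\mapsto s\hat S$ yields a trace inequality whose two sides agree at $s=0$ and whose comparison for $s>0$ is no easier than the original statement (the blocks of $T$ and $\hat S$ do not commute), and no proof is offered. In short, you have correctly located where the difficulty lives, twice, but resolved it in neither place; to make this a proof you must either complete the semigroup derivative computation of Route 1 or supply a genuine proof of the determinant inequality together with a correct Gaussian-minimizer reduction for part $(ii)$.
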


\medskip 

 Theorem \ref{thm.CDP} generalizes many fundamental results in analysis, such as H\"{o}lder inequality
and its reverse, Sharp Young inequality and its reverse (see \cite{Bec} and \cite{BL}), and 
Nelson's Gaussian Hypercontractivity and its reverse (see \cite{Nel2} and \cite{MOS}). Actually,
the first part of theorem \ref{thm.CDP} is a reforlmulation of the famous Bascamp-Lieb inequality, 
first prooved in \cite{BL} (see also \cite{Lieb1} for the fully generalized version), while the 
second part provides us with its generalized reverse form. 

\medskip 

The second main tool in our proof, is the other famous reverse form of the Brascamp-Lieb inequality 
proved by F. Barthe \cite{B}, that generalizes the Pr\'{e}kopa-Leindler inequality. Next we state the 
Geometric form of Barthe's theorem, first put forward by k.Ball \cite{Ball1}:
 
\begin{theorem} \label{thm.Barthe}
 Let $n,m,n_1,\ldots,n_m \in{\mathbb N}$. For every $i=1,\ldots,m$ let $U_i$ be a 
 $n_i\times n$ matrix with $U_iU_i^*=I_{n_i}$ and $c_1,\ldots,c_m$ be positive numbers 
 such that
 \begin{equation*}
  \sum_{i=1}^m c_i\,U_i^*U_i = I_n
 \end{equation*}
 Let $h:{\mathbb R}^n \rightarrow [0,+\infty)$ and 
 $f_i:{\mathbb R}^{n_i} \rightarrow [0,+\infty)$, $i=1,\ldots,m$ measurable functions 
 such that
 \begin{equation}
  h\left(\sum_{i=1}^Nc_i U_i^*\xi_i\right) \geq 
  \prod_{i=1}^m f_i(\xi_i)^{c_i}\quad \forall\,\xi_i\in\IR^{n_i}
 \end{equation}
 then
 \begin{equation}
  \int_{{\mathbb R}^n} h(x)\,d\gamma_n(x) \geq 
  \prod_{i=1}^m \left(\int_{{\mathbb R}^{n_i}} f_i(x)\,d\gamma_{n_i}(x)\right)^{c_i}
 \end{equation}
\end{theorem}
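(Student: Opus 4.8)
The plan is to prove this (Ball's geometric form of Barthe's reverse Brascamp--Lieb inequality) by a mass--transportation argument, assembling a single transport map on $\mathbb{R}^n$ out of the optimal maps in the factors $\mathbb{R}^{n_i}$. Throughout write $g_m(x)=(2\pi)^{-m/2}e^{-|x|^2/2}$ for the density of $\gamma_m$ and $F_i=\int_{\mathbb{R}^{n_i}}f_i\,d\gamma_{n_i}$. If some $F_i=0$ the right-hand side vanishes and there is nothing to prove, so I may assume every $F_i\in(0,+\infty)$; after a standard truncation and mollification I may further assume each $f_i$ is smooth, bounded and bounded away from $0$ on a large ball and that $h$ is lower semicontinuous, recovering the general case by approximation at the end. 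Two identities coming from the hypotheses will be used repeatedly: taking traces in $\sum_i c_iU_i^*U_i=I_n$ together with $U_iU_i^*=I_{n_i}$ gives $\sum_i c_in_i=n$, and for every $y\in\mathbb{R}^n$ one has $\sum_i c_i|U_iy|^2=\langle(\sum_i c_iU_i^*U_i)y,y\rangle=|y|^2$; the latter yields the factorization $\prod_i g_{n_i}(U_iy)^{c_i}=g_n(y)$.

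First I would normalize $\tilde f_i=f_i/F_i$, so that $\tilde f_i\,\gamma_{n_i}$ is a probability measure, and invoke Brenier's theorem to obtain convex functions $\phi_i$ whose gradients $T_i=\nabla\phi_i$ push $\gamma_{n_i}$ forward to $\tilde f_i\,\gamma_{n_i}$. Under the above regularity the Monge--Amp\`ere equation
\[
 g_{n_i}(y)=\tilde f_i(T_i(y))\,g_{n_i}(T_i(y))\,\det DT_i(y)
\]
holds, with $DT_i$ symmetric positive definite. I then assemble the map $\Theta:=\nabla\Phi$ where $\Phi(y):=\sum_i c_i\,\phi_i(U_iy)$; explicitly $\Theta(y)=\sum_i c_iU_i^*T_i(U_iy)$, and its differential $D\Theta(y)=\sum_i c_iU_i^*\,DT_i(U_iy)\,U_i$ is symmetric and positive definite as a sum of such matrices. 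Thus $\Theta=\nabla\Phi$ with $\Phi$ convex, and in particular $\Theta$ is injective.

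The heart of the argument is a chain of estimates. Since $h,g_n\ge0$ and $\Theta$ is injective, the change of variables $x=\Theta(y)$ gives
\[
 \int_{\mathbb{R}^n}h\,d\gamma_n\ \ge\ \int_{\mathbb{R}^n}h(\Theta(y))\,g_n(\Theta(y))\,\det D\Theta(y)\,dy .
\]
Setting $\xi_i=T_i(U_iy)$ we have $\sum_i c_iU_i^*\xi_i=\Theta(y)$, so the hypothesis on $h$ yields $h(\Theta(y))\ge\prod_i f_i(\xi_i)^{c_i}=\prod_i F_i^{c_i}\prod_i\tilde f_i(\xi_i)^{c_i}$. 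Substituting the Monge--Amp\`ere relation for each $\tilde f_i(\xi_i)$ and using $\prod_i g_{n_i}(U_iy)^{c_i}=g_n(y)$, it remains to check that the integrand dominates $g_n(y)$, which reduces to the two pointwise estimates
\[
 \det D\Theta(y)\ \ge\ \prod_i(\det DT_i(U_iy))^{c_i}
 \qquad\text{and}\qquad
 |\Theta(y)|^2\ \le\ \sum_i c_i\,|\xi_i|^2 .
\]
The second is immediate: the operator $W:\mathbb{R}^n\to\bigoplus_i\mathbb{R}^{n_i}$, $Wx=(\sqrt{c_i}\,U_ix)_i$, satisfies $W^*W=\sum_i c_iU_i^*U_i=I_n$, hence $\|W^*\|\le1$, and $\Theta(y)=W^*(\sqrt{c_i}\,\xi_i)_i$ gives $|\Theta(y)|^2\le\sum_i c_i|\xi_i|^2$. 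Granting both estimates, the pointwise bound (integrand)$\,\ge g_n(y)$ integrates against $dy$ (using $\int g_n=1$) to give $\int h\,d\gamma_n\ge\prod_i F_i^{c_i}$, which is the claim.

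I expect the main obstacle to be the first pointwise estimate, namely the determinant inequality $\det(\sum_i c_iU_i^*A_iU_i)\ge\prod_i(\det A_i)^{c_i}$ for positive definite $A_i$ under the normalization $\sum_i c_iU_i^*U_i=I_n$, $U_iU_i^*=I_{n_i}$ (here $A_i=DT_i(U_iy)$); this is Ball's determinant lemma, which collapses to the concavity of $\log\det$ when all $U_i=I_n$ but genuinely requires the geometric decomposition in general, so I would either cite it or derive it from the Gaussian case of the theorem. The remaining delicate point is purely measure-theoretic: justifying the Monge--Amp\`ere equation and the change-of-variables formula for the monotone map $\Theta$ (via Alexandrov's theorem on second derivatives of convex functions and McCann's change-of-variables formula when the $f_i$ are merely measurable), and carrying out the approximation that removes the smoothness and positivity assumptions while preserving the pointwise functional inequality imposed on $h$.
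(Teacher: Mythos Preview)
The paper does not prove this statement: Theorem~\ref{thm.Barthe} is stated as a known result (the geometric form of Barthe's reverse Brascamp--Lieb inequality, due to Barthe \cite{B} with the geometric formulation attributed to Ball \cite{Ball1}) and is invoked as a black box in the proof of Proposition~\ref{prop.log-moments}. So there is no ``paper's own proof'' to compare against.

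Your sketch is essentially Barthe's original mass-transportation argument and is sound in outline. A few remarks. Your claim that $D\Theta$ is positive definite is correct, but the one-line justification ``sum of such matrices'' is not quite enough since each summand $c_iU_i^*DT_i(U_iy)U_i$ has rank at most $n_i<n$; the point is that if $\langle D\Theta(y)v,v\rangle=0$ then (with the $DT_i$ positive definite) $U_iv=0$ for all $i$, whence $|v|^2=\sum_ic_i|U_iv|^2=0$. More importantly, be careful with the suggestion to ``derive \emph{[the determinant lemma]} from the Gaussian case of the theorem'': plugging centered Gaussians into the very inequality you are proving would be circular. What is non-circular is to obtain the determinant lemma from the geometric \emph{Brascamp--Lieb} inequality (the forward direction) evaluated on Gaussians, or to prove it directly by the variational argument in \cite{B}; either route is standard and should be stated explicitly. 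With that clarification, and with the regularity/approximation caveats you already flag, your argument goes through.
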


\subsection{Decomposing the identity}

 We are going to apply theorem \ref{thm.CDP} in the special case where the covariance 
matrix is of the form $T= \big([T_{ij}]\big)_{i,j\leq n}$ $kn\times kn$, with $T_{ii}=I_k$ 
and $T_{ij} =tI_k$ if $i\neq j$, $t\in [-\frac{1}{n-1},1]$. Equivalently, in this case 
$X_{1}, \cdots, X_{n}$ are standard Gaussian random vectors in ${\mathbb R}^k$, such that 
\begin{equation}\label{eq.t-cor}
 {\mathbb E}(X_i X^*_j)=
 \left\{\begin{array}{rr} 
         I_k \,, & i=j \\
         tI_k \,, & i\neq j 
        \end{array}\right.
\end{equation}

For any $t\in[0,1]$, a natural way to construct such random vectors is to consider 
$n$ independent copies $Z_1,\ldots,Z_n$, of a $Z\sim N(0, I_k)$ and set 
\begin{equation*}\label{eq.sqrt{t}}
X_i:= \sqrt{t}\,Z + \sqrt{1-t}\,Z_{i}\,, \quad \ i=1,\ldots,n. 
\end{equation*}
It's then easy to check that condition \eqref{eq.t-cor} holds true for these vectors. 
However, we are going to construct such vectors using a more geometric language. We 
first make this construction the ``$k=1$" case of the theorems, and then we pass it 
for any $k\in{\mathbb N}$, using a tensorization argument. We begin with the definition
of the SR-simplex.

\medskip

\begin{definition}
 We say that $S={\rm conv}\{{\rm v}_1,\ldots,{\rm v}_n\} \subseteq {\mathbb R}^{n-1}$ 
 is the {\it spherico-regular simplex} (in short SR-simplex) if ${\rm v}_1,\ldots,{\rm v}_n$ 
 are unit vectors in ${\mathbb R}^{n-1}$ enjoying the properties 
 \begin{itemize}
  \item[(SR1)] $\langle {\rm v}_i,{\rm v}_j\rangle = -\frac{1}{n-1}$, for any $i\neq j$
  \item[(SR2)] $\sum_{i=1}^{n} {\rm v}_i=0$.
 \end{itemize}
\end{definition}
 
\noindent Using the vertices of the SR-simplex in ${\mathbb R}^{n-1}$, one can create $n$ vectors 
in ${\mathbb R}^n$ with the same angle between them. This is done in next lemma, which is a special
case of a more general fact, observed in \cite[sec. 3.1]{CDP}

\begin{lemma}\label{prop.decomp-t}
  Let $n\geq 2$ and ${\rm v}_1,\ldots,{\rm v}_n$ 
  be the vertices  of any RS-Simplex in $\IR^{n-1}$. For every 
  $t\in[-\frac{1}{n-1}\,,\,1]$, let $u_1,\ldots,u_n$ in ${\mathbb R}^n$ be the unit vectors
  in ${\mathbb R}^n$ with
  \begin{equation}
    u_i=u_i(t)=
    \sqrt{\frac{t(n-1)+1}{n}}\;\;e_n\;+\;\sqrt{\frac{n-1}{n}(1-t)}\;\;{\rm v}_i .
  \end{equation}
  Then we have that
  
  \begin{equation} \label{eq.eq-t}
    \inner{u_i}{u_j}=t \ ,\qquad \forall\,i\neq j .
  \end{equation}

  \medskip 

  \noindent Moreover, using those vectors we can decompose the identity in $\IR^n$:
  \begin{itemize}
   \item[(i)]  If $t\in[0,1]$, then
               \begin{equation}\label{eq.pos-dec}
	        \frac{1}{t(n-1)+1}\,\sum_{i=1}^n u_i u_i^* +
	        \frac{nt}{t(n-1)+1}\,\sum_{j=1}^{n-1} e_j e_j^* = I_n.
	       \end{equation}
   \item[(ii)] If $t\in[-\frac{1}{n-1},0]$, then
	       \begin{equation}\label{eq.neg-dec}
	        \frac{1}{1-t}\,\sum_{i=1}^n u_i u_i^* +
	        \frac{-nt}{1-t}\, e_n e_n^* = I_n.
	       \end{equation}
\end{itemize}
\end{lemma}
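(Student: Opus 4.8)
\textbf{Proof proposal for Lemma \ref{prop.decomp-t}.}

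The plan is to verify the angle identity \eqref{eq.eq-t} by a direct computation, and then to deduce the two identity decompositions \eqref{eq.pos-dec} and \eqref{eq.neg-dec} by checking that the claimed sums act as the identity on a spanning set of $\IR^n$. First I would compute $\inner{u_i}{u_j}$ for $i\neq j$ using the defining formula for $u_i$: since $e_n\perp{\rm v}_i$ (the ${\rm v}_i$ live in $\IR^{n-1}=e_n^\perp$) and $|{\rm v}_i|=1$, bilinearity gives
\[
 \inner{u_i}{u_j}=\frac{t(n-1)+1}{n}+\frac{n-1}{n}(1-t)\,\inner{{\rm v}_i}{{\rm v}_j}
 =\frac{t(n-1)+1}{n}-\frac{n-1}{n}(1-t)\cdot\frac{1}{n-1},
\]
using (SR1), and this simplifies to $t$. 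The same computation with $i=j$ and $\inner{{\rm v}_i}{{\rm v}_i}=1$ gives $|u_i|^2=\frac{t(n-1)+1}{n}+\frac{n-1}{n}(1-t)=1$, confirming the $u_i$ are unit vectors; this also requires $t(n-1)+1\geq 0$, i.e. $t\geq-\frac{1}{n-1}$, for the square roots to be real.

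For part (i), I would evaluate the left-hand side of \eqref{eq.pos-dec} as a quadratic form on an arbitrary $x\in\IR^n$, or equivalently compute $\sum_i u_iu_i^*$ explicitly. Write $u_i=\alpha e_n+\beta{\rm v}_i$ with $\alpha^2=\frac{t(n-1)+1}{n}$, $\beta^2=\frac{n-1}{n}(1-t)$. Then $\sum_i u_iu_i^* = n\alpha^2\,e_ne_n^* + \alpha\beta\big(e_n(\sum_i{\rm v}_i)^* + (\sum_i{\rm v}_i)e_n^*\big) + \beta^2\sum_i{\rm v}_i{\rm v}_i^*$. The cross term vanishes by (SR2). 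For the last term, the standard fact about the SR-simplex is that $\sum_{i=1}^n {\rm v}_i{\rm v}_i^* = \frac{n}{n-1} I_{n-1}$ as an operator on $\IR^{n-1}$ (this follows from (SR1)-(SR2): the Gram matrix of the ${\rm v}_i$ is $\frac{n}{n-1}(I_n - \frac1n J)$, which has the right trace and is a scalar on the hyperplane orthogonal to the all-ones vector; alternatively sum the matrix identity over a symmetry argument). Hence $\sum_i u_iu_i^* = (t(n-1)+1)\,e_ne_n^* + \frac{n-1}{n}(1-t)\cdot\frac{n}{n-1}\sum_{j=1}^{n-1}e_je_j^* = (t(n-1)+1)\,e_ne_n^* + (1-t)\sum_{j=1}^{n-1}e_je_j^*$. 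Dividing by $t(n-1)+1$ and adding $\frac{nt}{t(n-1)+1}\sum_{j=1}^{n-1}e_je_j^*$ gives $e_ne_n^* + \frac{(1-t)+nt}{t(n-1)+1}\sum_{j=1}^{n-1}e_je_j^* = e_ne_n^* + \sum_{j=1}^{n-1}e_je_j^* = I_n$, as desired. Note this step implicitly uses $t(n-1)+1>0$, i.e. $t>-\frac1{n-1}$, so the coefficients are well-defined; for $t\in[0,1]$ this is automatic.

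For part (ii), using the same expression $\sum_i u_iu_i^* = (t(n-1)+1)\,e_ne_n^* + (1-t)\sum_{j=1}^{n-1}e_je_j^*$, I would divide by $1-t$ (legitimate since $t\leq 0<1$) to get $\frac{1}{1-t}\sum_i u_iu_i^* = \frac{t(n-1)+1}{1-t}e_ne_n^* + \sum_{j=1}^{n-1}e_je_j^*$. Adding $\frac{-nt}{1-t}e_ne_n^*$, the coefficient of $e_ne_n^*$ becomes $\frac{t(n-1)+1-nt}{1-t} = \frac{1-t}{1-t}=1$, yielding $I_n$. The only genuine obstacle is establishing the operator identity $\sum_{i=1}^n {\rm v}_i{\rm v}_i^* = \frac{n}{n-1}I_{n-1}$ for the SR-simplex; once that is in hand everything is bookkeeping. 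I would prove it by noting the matrix $M:=\sum_i {\rm v}_i{\rm v}_i^*$ is symmetric positive semidefinite on $\IR^{n-1}$, that $\mathrm{tr}(M) = \sum_i|{\rm v}_i|^2 = n$, and that $M$ commutes with the symmetry group of the regular simplex which acts irreducibly on $\IR^{n-1}$, so $M$ is a scalar, hence $M = \frac{n}{n-1}I_{n-1}$; alternatively, one can cite \cite[sec. 3.1]{CDP} since the lemma is stated there to be a special case of a fact proved in that reference.
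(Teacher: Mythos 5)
Your proof is correct and is essentially the paper's approach: the authors dispose of the lemma with the single line ``a direct computation shows\dots'', and your computation is exactly the direct verification they have in mind, with the one genuinely nontrivial ingredient --- the operator identity $\sum_{i=1}^n {\rm v}_i{\rm v}_i^*=\frac{n}{n-1}I_{n-1}$ for the SR-simplex --- correctly isolated and proved (either the Gram-matrix/singular-value argument or the Schur-lemma argument you sketch works, and the fact is indeed the content of \cite[sec.~3.1]{CDP}). All the arithmetic checks out, including the unit-norm and inner-product computations and the two coefficient cancellations $\frac{(1-t)+nt}{t(n-1)+1}=1$ and $\frac{t(n-1)+1-nt}{1-t}=1$.
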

\begin{proof}
A direct computation shows that $\eqref{eq.eq-t}$, $\eqref{eq.pos-dec}$ 
and $\eqref{eq.neg-dec}$ holds true. 
\end{proof}

\medskip

\begin{remark}\label{rmk.rank-1}
 If $Z\sim N(0,I_n)$, then $X_i:=\langle u_i,Z\rangle$, $i=1,\ldots,n$, are standard 
 Gaussian random variables, satisfying the condition \eqref{eq.t-cor} in the $1$-dimensional 
 case. 
\end{remark}

In order to make the same construction in the general $k$-dimensional case, we use a 
more or less standard tensorization argument. We start with the definition of the 
\textit{tensor product} between two matrices.

\begin{definition}
 Let $A \in {\mathbb R}^{m \times n}$ and $B \in {\mathbb R}^{k \times \ell}$. Then the
 tensor product of $A$ and $B$ is the matrix
 \[
 A=\left[\begin{array}{ccc}
          a_{11}B & \cdots & a_{1n}B \\
           \vdots & \ddots & \vdots \\
          a_{m1}B & \cdots & a_{mn}B
         \end{array}
\right] \in {\mathbb R}^{km \times \ell n}.
\]
\end{definition}

Every vector $a\in{\mathbb R}^n$ is considered to be a column $n\times 1$ matrix, and with 
this notation in mind, we state some basic properties for the tensor product.

\begin{lemma}\label{lemma-tensor}
 \begin{enumerate}
  \item Let $a=(a_1,\ldots,a_m)^*\in{\mathbb R}^m$ and $b=(b_1,\ldots,b_n)^*\in{\mathbb R}^n$. 
        Then
        \[
         a\otimes b^*=ab^*=\left[\begin{array}{ccc}
                            a_1b_1 & \cdots & a_1b_n \\
                             \vdots & \ddots & \vdots \\
                            a_mb_1 & \cdots & a_mb_n
                            \end{array}\right] 
         \in {\mathbb R}^{m \times n}.
        \]
        As linear transformation: $a\otimes b^*=ab^*:{\mathbb R}^n\mapsto{\mathbb R}^m$ with
        \[
         (a\otimes b^*)(x)=(ab^*)(x)=\langle x,b \rangle\,a,
        \]
        for every $x\in{\mathbb R}^n$.
  \item Let $A_i \in {\mathbb R}^{m \times n}$ and $B \in {\mathbb R}^{k \times \ell}$. Then
        $
         \left(\sum_i A_i\right) \otimes B = \sum_i A_i \otimes B 
        $
  \item Let $A_1 \in {\mathbb R}^{m \times n}$, $B_1 \in {\mathbb R}^{k \times \ell}$,
        and $A_2 \in {\mathbb R}^{n \times r}$, $B_2 \in {\mathbb R}^{\ell \times s}$.Then
        \[
         \left(A_1 \otimes B_1\right) \left(A_2 \otimes B_2\right)
         = A_1A_2 \otimes B_1B_2 \in {\mathbb R}^{}
        \]
  \item For all $A$ and $B$,
        \[
         (A\otimes B)^* =A^* \otimes B^*
        \]
 \end{enumerate} 
\end{lemma}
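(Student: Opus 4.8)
The plan is to reduce all four statements to a single bookkeeping identity and then verify each claim by a one-line entry computation. Concretely, I would index the rows of an $km\times \ell n$ matrix by ordered pairs $(i,p)$ with $1\le i\le m$, $1\le p\le k$, taken lexicographically, and similarly its columns by pairs $(j,q)$ with $1\le j\le n$, $1\le q\le \ell$. With this convention the block definition $A\otimes B=[a_{ij}B]$ is equivalent to the scalar formula
\[
 (A\otimes B)_{(i,p),(j,q)}=a_{ij}\,b_{pq}.
\]
Once this formula is recorded, everything else is routine, so I would state it at the outset and use it throughout.

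For item 1, a vector $a\in{\mathbb R}^m$ is the $m\times 1$ matrix with entries $a_{i1}=a_i$ and $b^*$ is the $1\times n$ matrix with entries $b_{1j}=b_j$; substituting into the scalar formula gives $(a\otimes b^*)_{ij}=a_i b_j$, which is exactly the $(i,j)$ entry of the outer product $ab^*$. The action on $x\in{\mathbb R}^n$ is then $(ab^*)x=a(b^*x)=\langle x,b\rangle\,a$, since $b^*x=\sum_j b_j x_j=\langle x,b\rangle$. For item 2, linearity of the entry formula in the $A$-slot, namely $\big(\sum_i A_i\big)_{jq}=\sum_i (A_i)_{jq}$, immediately gives $\big(\sum_i A_i\big)\otimes B=\sum_i (A_i\otimes B)$ entrywise, hence as matrices. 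For item 4, the scalar formula yields $\big((A\otimes B)^*\big)_{(i,p),(j,q)}=(A\otimes B)_{(j,q),(i,p)}=a_{ji}b_{qp}=(A^*)_{ij}(B^*)_{pq}=(A^*\otimes B^*)_{(i,p),(j,q)}$.

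For item 3 I would first check that the dimensions are compatible: $A_1\otimes B_1$ is $km\times \ell n$ and $A_2\otimes B_2$ is $\ell n\times rs$, so the product is defined and is $km\times rs$, matching $A_1A_2\otimes B_1B_2$. Then, summing over an intermediate pair $(u,w)$ with $1\le u\le n$, $1\le w\le\ell$, and using the scalar formula together with the definition of matrix multiplication,
\[
 \big((A_1\otimes B_1)(A_2\otimes B_2)\big)_{(i,p),(j,q)}
 =\sum_{u,w}(A_1)_{iu}(B_1)_{pw}(A_2)_{uj}(B_2)_{wq}
 =\Big(\sum_u (A_1)_{iu}(A_2)_{uj}\Big)\Big(\sum_w (B_1)_{pw}(B_2)_{wq}\Big),
\]
which equals $(A_1A_2)_{ij}(B_1B_2)_{pq}=(A_1A_2\otimes B_1B_2)_{(i,p),(j,q)}$. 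The only real obstacle here is notational rather than mathematical: one must fix once and for all the lexicographic pairing of indices so that the block definition becomes the scalar formula above, and one must be careful that in item 3 the ordering of the intermediate index pair is used consistently on both sides; after that, all four identities are immediate.
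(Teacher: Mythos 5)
Your proof is correct and complete. The paper states this lemma without any proof, treating these as standard Kronecker-product identities, so there is no argument to compare against; your reduction to the scalar entry formula $(A\otimes B)_{(i,p),(j,q)}=a_{ij}b_{pq}$ under lexicographic pairing of indices is exactly the canonical way to verify all four items, and your dimension check and factorization of the double sum in item 3 are the only points where care is needed.
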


\noindent Consider now the matrices

\begin{eqnarray}
 U_i&:=& u_i^* \otimes I_k = 
 \Big[
 \begin{array}{ccc}
        \big[u_{i1}I_k\big] & \cdots & \big[u_{in}I_k\big]
       \end{array} 
 \Big]\quad (k \times kn)\;, \qquad i=1,\ldots,n \label{eq.Ui} \\
 E_j&:=& e_j^* \otimes I_k = 
 \Big[
 \begin{array}{ccc}
        \big[e_{j1}I_k\big] & \cdots & \big[e_{jn}I_k\big]
       \end{array} 
 \Big]\quad (k \times kn)\;, \qquad j=1,\ldots,n. \label{eq.Ei}
\end{eqnarray}
Then,
\begin{align}
 U_i^*U_i &= (u_i^*\otimes I_k)^* (u_i^*\otimes I_k) = u_iu_i^*\otimes I_k,\quad kn \times kn \nonumber \\
 {\rm and} \qquad \quad & \nonumber \\
 E_j^*E_j &= (e_j^*\otimes I_k)^* (e_j^*\otimes I_k) = e_je_j^*\otimes I_k,\quad kn \times kn \nonumber
\end{align}
and thus, by taking the tensor product with $I_k$, in both sides of \eqref{eq.pos-dec}, we have that
\begin{equation}\label{eq.pos-dec-k}
  \frac{1}{p}\,\sum_{i=1}^n U_i^* U_i +
  \frac{nt}{p}\,\sum_{j=1}^{n-1} E_j^*E_j = I_{kn},
\end{equation}
for every $t\in[0,1]$, where $p:=(n-1)t+1$.

\medskip

With the help of these matrices we are ready now to construct the general situation, 
describing in \eqref{eq.t-cor}. We summarize in next lemma.

\begin{lemma}\label{lemma.multi-rank}
 Let $Z_1,\ldots,Z_n$ be iid $N(0,I_k)$, ${\bf Z}=(Z_1,\ldots,Z_n)\sim N(0,I_{kn})$, 
 end for every $i=1,\ldots,n$ consider the random vectors 
 \begin{equation}\label{eq.multi-rank}
  X_i:=U_i{\bf Z}=\sum_{a=1}^n u_{ia} Z_a
 \end{equation}
 Then $X_i \sim N(0,I_k)$ for every $i=1,\ldots,n$, while for $i\neq j$
 \begin{equation}\label{eq.t-cor-k}
  {\mathbb E} \big[X_i\otimes X_j^*\big]=\big[{\mathbb E}X_{ir}X_{j\ell}\big]_{r,\ell\leq k}
  =\big[t\delta_{r\ell}\big]_{r,\ell\leq k} = t I_k
 \end{equation}
\end{lemma}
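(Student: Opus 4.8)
The proof of Lemma \ref{lemma.multi-rank} should be a routine verification once the algebraic setup is in place, so the plan is mostly to assemble the pieces already built. First I would note that since $\mathbf{Z}\sim N(0,I_{kn})$ and $X_i=U_i\mathbf{Z}$ is a linear image of a centered Gaussian vector, each $X_i$ is automatically a centered Gaussian vector in $\mathbb{R}^k$; so the only substantive point is to compute the covariance and cross-covariance matrices, i.e. to show $\mathbb{E}[X_i\otimes X_j^*]=U_i U_j^*$ and then identify this with $I_k$ when $i=j$ and with $tI_k$ when $i\neq j$.

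The key computation is $\mathbb{E}[X_i X_j^*]=\mathbb{E}[U_i\mathbf{Z}\,(U_j\mathbf{Z})^*]=U_i\,\mathbb{E}[\mathbf{Z}\mathbf{Z}^*]\,U_j^*=U_i U_j^*$, using $\mathbb{E}[\mathbf{Z}\mathbf{Z}^*]=I_{kn}$. Then I would invoke part (3) of Lemma \ref{lemma-tensor} together with the definition \eqref{eq.Ui} of $U_i=u_i^*\otimes I_k$:
\[
U_i U_j^* = (u_i^*\otimes I_k)(u_j^*\otimes I_k)^* = (u_i^*\otimes I_k)(u_j\otimes I_k) = (u_i^* u_j)\otimes I_k = \inner{u_i}{u_j}\,I_k,
\]
where I used part (4) for $(u_j^*\otimes I_k)^*=u_j\otimes I_k$ and the fact that $u_i^* u_j=\inner{u_i}{u_j}$ is a scalar. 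Finally, feeding in \eqref{eq.eq-t} from Lemma \ref{prop.decomp-t}, namely $\inner{u_i}{u_j}=t$ for $i\neq j$ and $\inner{u_i}{u_i}=|u_i|^2=1$ since the $u_i$ are unit vectors, yields $\mathbb{E}[X_i\otimes X_i^*]=I_k$ and $\mathbb{E}[X_i\otimes X_j^*]=tI_k$ for $i\neq j$, which is exactly \eqref{eq.t-cor-k}. The entrywise reading $[\mathbb{E}X_{ir}X_{j\ell}]_{r,\ell\le k}=[t\delta_{r\ell}]_{r,\ell\le k}$ is just the coordinate form of the same identity.

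There is no real obstacle here; the only thing to be a little careful about is bookkeeping with the tensor-product conventions — in particular remembering that $u_i^*\otimes I_k$ is a $k\times kn$ matrix, that its adjoint is $u_i\otimes I_k$ (a $kn\times k$ matrix), and that the product lands back in $\mathbb{R}^{k\times k}$, which is guaranteed by part (3) of Lemma \ref{lemma-tensor}. One could alternatively write the proof coordinatewise using $X_i=\sum_a u_{ia}Z_a$ and $\mathbb{E}[Z_a Z_b^*]=\delta_{ab}I_k$, getting $\mathbb{E}[X_i X_j^*]=\sum_{a,b}u_{ia}u_{jb}\delta_{ab}I_k=\big(\sum_a u_{ia}u_{ja}\big)I_k=\inner{u_i}{u_j}I_k$, which is perhaps the most transparent route and avoids any reliance on the tensor-product lemma beyond the definitions.
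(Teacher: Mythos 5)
Your proposal is correct and matches the paper's argument: the paper proves the lemma by exactly the coordinatewise computation you give as your ``alternative'' route, namely $\mathbb{E}X_{ir}X_{j\ell}=\sum_a u_{ia}u_{ja}\,\delta_{r\ell}=\langle u_i,u_j\rangle\,\delta_{r\ell}$ followed by \eqref{eq.eq-t}, and your primary tensor-product formulation $U_iU_j^*=\langle u_i,u_j\rangle I_k$ is just the matrix form of the same identity (it even appears as an item of Lemma \ref{lemma.tech-1}). No gaps.
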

\begin{proof}
 Clearly, ${\mathbb E}X_i = 0$, for every $i,j=1,\ldots,n$, and since
 \[
  {\mathbb E} \big[Z_a\otimes Z_b^*\big]
  =\big[{\mathbb E}Z_{ar}Z_{b\ell}\big]_{r,\ell\leq k}= \delta_{\alpha\beta}I_k
 \]
 we have that
 \begin{eqnarray*}
  {\mathbb E} X_{ir} X_{j\ell} 
  &=& {\mathbb E}\left(\sum_{a=1}^n u_{ia} Z_{ar}\right)\left(\sum_{b=1}^n u_{jb} Z_{b\ell}\right) \\
  &=& \sum_{a=1}^n\sum_{b=1}^n u_{ia} u_{jb}\,{\mathbb E} Z_{ar} Z_{b\ell} \\
  &=& \sum_{a=1}^n u_{ia} u_{ja}\,{\mathbb E} Z_{ar} Z_{a\ell} \\
  &=& \sum_{a=1}^n u_{ia} u_{ja}\,\delta_{r\ell} \\
  &=& \langle u_i, u_j \rangle \, \delta_{r\ell}.
 \end{eqnarray*}
 and from \eqref{eq.eq-t} the proof is complete.
\end{proof}

\subsection{Proof of theorem \ref{thm.sqrt-moments}}

Next proposition, that has a separate interest by its own, gives the first step for the proof 
of our main result, theorem \ref{thm.sqrt-moments}. 

\begin{proposition}\label{prop.log-moments}
 Let $t\in [0,1]$, $n\in{\mathbb N}$, $p = t(n-1)+1$, $X$ be a standard Gaussian random vector in 
 ${\mathbb R}^k$, $k\in{\mathbb N}$ and $X_1, \cdots, X_n$ be copies of $X$ such that 
 \[
  \mathbb E(X_i \, X_j^*)=\big({\mathbb E} X_{ir}X_{j\ell}\big)_{r,\ell\leq k}=tI_k, \quad i\neq j.
 \] 
 Then, for any $\log$-concave (on its support) function $f:{\mathbb R}^k\rightarrow [0,+\infty)$, 
 we have that
 \begin{equation} \label{eq.log-momment1}
 {\mathbb E} \left( \prod_{i=1}^{n} f(X_{i}) \right)^{\frac{1}{n}} \leq 
 \bigg( {\mathbb E} f(X)^{\frac{p}{n}}\bigg)^{\frac{n}{p}} \leq 
 {\mathbb E} f \left(\frac{1}{n} \sum_{i=1}^{n} X_i \right)
 \end{equation} 
 Note that, since $f$ is $\log$-concave we always have that 
 $\big(\prod_{i=1}^{n} f(X_{i})\big)^{\frac1n} \leq f\big(\frac{1}{n} \sum_{i=1}^{n} X_{i}\big)$, 
 while equality is achieved if 
 $f(x)= e^{\langle {\rm a},x \rangle + c}$, ${\rm a}\in{\mathbb R}^k$ and $c\in{\mathbb R}$. 
\end{proposition}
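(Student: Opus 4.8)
The plan is to derive the two inequalities in \eqref{eq.log-momment1} from the two parts of Theorem \ref{thm.CDP} and from Barthe's inequality (Theorem \ref{thm.Barthe}), applied to the explicit decompositions of the identity produced in Lemma \ref{prop.decomp-t} and tensorized in \eqref{eq.pos-dec-k}. The right-hand inequality, ${\mathbb E}f(X)^{p/n}$ raised to $n/p$ being at most ${\mathbb E}f\big(\tfrac1n\sum X_i\big)$, is the ``reverse'' estimate and should come from Barthe; the left-hand inequality is the ``direct'' Brascamp--Lieb type estimate and should come from part $(i)$ of Theorem \ref{thm.CDP}, i.e.\ from \eqref{eq.CDP-up}.

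\medskip

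\noindent\textbf{Left inequality.} First I would set up the Gaussian vector ${\bf X}=(X_1,\dots,X_n)$ in ${\mathbb R}^{kn}$ whose covariance $T$ has diagonal blocks $I_k$ and off-diagonal blocks $tI_k$, exactly as in Lemma \ref{lemma.multi-rank}. Taking in Theorem \ref{thm.CDP} all the exponents equal to $p_i=p=(n-1)t+1$ and all the functions equal to $f$, the block-diagonal matrix is $P=p\,I_{kn}$, and one checks that $P-T$ is positive semidefinite precisely because the eigenvalues of $T$ are $1+(n-1)t=p$ (once, on the ``all-ones'' direction tensored with ${\mathbb R}^k$) and $1-t\le p$ (with multiplicity $k(n-1)$); hence $T\le P$. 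Applying \eqref{eq.CDP-up} then gives
\[
 {\mathbb E}\prod_{i=1}^n f(X_i)\ \le\ \Big({\mathbb E}f(X)^{p}\Big)^{n/p}.
\]
To obtain the stated form with exponent $p/n$, I would apply this to the log-concave function $f^{1/n}$ in place of $f$ (a power of a log-concave function is log-concave), which replaces $f(X_i)$ by $f(X_i)^{1/n}$ and $f(X)^p$ by $f(X)^{p/n}$, yielding exactly the left-hand inequality of \eqref{eq.log-momment1}. Note this half does not even use log-concavity of $f$ beyond the harmless substitution.

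\medskip

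\noindent\textbf{Right inequality.} For the reverse bound I would use the geometric Barthe inequality with the decomposition \eqref{eq.pos-dec-k}: the matrices are $U_1,\dots,U_n$ (each $k\times kn$, with $U_iU_i^*=I_k$) with coefficients $c_i=1/p$, together with $E_1,\dots,E_{n-1}$ with coefficients $nt/p$, and these satisfy $\sum c_iU_i^*U_i+\sum (nt/p)E_j^*E_j=I_{kn}$. For the factor functions I take $f_i=f^{p/n}$ on the $U_i$-coordinates and $f_j\equiv 1$ on the $E_j$-coordinates, and for the target function on ${\mathbb R}^{kn}$ I take $h(\cdot)=f\big(\tfrac1n\sum_{i=1}^n x_i\big)$ where $x_i\in{\mathbb R}^k$ are the blocks. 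The point-wise hypothesis of Theorem \ref{thm.Barthe} then reads: for all $\xi_i\in{\mathbb R}^k$,
\[
 f\Big(\tfrac1n\sum_{i=1}^n\big(\tfrac1p\,\xi_i\big)\Big)\ \ge\ \prod_{i=1}^n f(\xi_i)^{p/(np)} \cdot 1,
\]
which, after absorbing the $1/p$ into a relabeling $\eta_i=\xi_i/p$ (so $\sum c_iU_i^*\xi_i$ has block $\tfrac1p\sum_i\xi_i=\sum_i\eta_i$), is exactly $f\big(\tfrac1n\sum\eta_i\big)\ge\big(\prod f(p\,\eta_i)\big)^{1/n}$. Here the key obstacle, and the place where log-concavity is genuinely used, is verifying this pointwise superadditivity-type inequality; I would check it by writing $f=e^{-\phi}$ with $\phi$ convex on its support and observing that $\tfrac1n\sum(p\eta_i)$ is a weighted combination whose convex-combination structure, combined with $p\ge1$, makes Jensen's inequality for $\phi$ go the right way — this is morally the content of the displayed parenthetical remark in the proposition (the $\big(\prod f(X_i)\big)^{1/n}\le f\big(\tfrac1n\sum X_i\big)$ comment), applied with a scaling by $p$. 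Once the hypothesis is checked, Barthe's conclusion gives
\[
 {\mathbb E}\,f\Big(\tfrac1n\sum_{i=1}^n X_i'\Big)\ \ge\ \Big({\mathbb E}f^{p/n}\Big)^{n/p},
\]
where $X_i'=U_i{\bf Z}$ are exactly the vectors of Lemma \ref{lemma.multi-rank}, and the extra factors $E_j$ integrate to $1$; this is the right-hand inequality.

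\medskip

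\noindent\textbf{Equality case and main difficulty.} For the equality discussion I would trace through the above: for $f(x)=e^{\langle a,x\rangle+c}$ the pointwise inequality in Barthe becomes an equality and the Brascamp--Lieb inequality is saturated on exponentials, so both bounds in \eqref{eq.log-momment1} are equalities. The main obstacle I anticipate is purely bookkeeping rather than conceptual: correctly matching the coefficients $c_i=1/p$ and the auxiliary coordinates $E_j$ with coefficient $nt/p$ to the normalization required by Barthe's theorem (the $U_iU_i^*=I$ condition and the partition of unity), and making sure the substitution $f\leadsto f^{p/n}$ versus $f^{1/n}$ is done consistently on the two sides so that the middle quantity $\big({\mathbb E}f^{p/n}\big)^{n/p}$ appears with the same exponent in both halves. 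The genuinely substantive step is the pointwise log-concavity inequality feeding into Barthe, and I would isolate it as a short lemma before invoking Theorem \ref{thm.Barthe}.
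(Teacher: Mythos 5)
Your overall architecture coincides with the paper's: the left inequality from Theorem \ref{thm.CDP}(i) with all exponents equal to $p$ and all functions equal to $f^{1/n}$ (your eigenvalue check that $T\le pI_{kn}$ is correct), and the right inequality from Barthe's theorem applied to the decomposition \eqref{eq.pos-dec-k} with $c_i=1/p$, $c_{n+a}=nt/p$, $\tilde f_i=f^{p/n}$ and $h(x)=f\big(\tfrac1n\sum_iU_ix\big)$. The left half is fine as written.

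The gap is in your evaluation of the pointwise hypothesis for Barthe. You claim the argument of $f$ comes out as $\tfrac1n\sum_i\tfrac1p\xi_i$, and after your relabeling you are left to prove $f\big(\tfrac1n\sum_i\eta_i\big)\ge\big(\prod_if(p\eta_i)\big)^{1/n}$. That inequality is false for general log-concave $f$ when $p>1$: take $f(x)=e^{\langle a,x\rangle}$ (your own equality case) and all $\eta_i=\eta$ with $\langle a,\eta\rangle>0$; the right-hand side is $e^{p\langle a,\eta\rangle}>e^{\langle a,\eta\rangle}$. The source of the error is that $h$ composes $f$ with $\tfrac1n\sum_iU_i$, so the relevant argument is $\tfrac1n\sum_j\tfrac1p\big\langle\sum_iu_i,\,u_j\big\rangle\,\xi_j$; since $\sum_iu_i=\sqrt{np}\,e_n$ and $\langle u_j,e_n\rangle=\sqrt{p/n}$, one has $\sum_i\langle u_i,u_j\rangle=p$, which cancels the $1/p$ exactly. (Likewise the contributions of the auxiliary blocks $E_a^*\xi_{n+a}$ vanish because $U_iE_a^*$ is a multiple of $\langle {\rm v}_i,e_a\rangle I_k$ and $\sum_i{\rm v}_i=0$ --- a point you omit, but it is needed since the hypothesis must hold for all $\xi_{n+a}$ as well.) The correct pointwise hypothesis is therefore simply $f\big(\tfrac1n\sum_i\xi_i\big)\ge\prod_if(\xi_i)^{1/n}$, i.e.\ the plain arithmetic--geometric mean form of log-concavity with no scaling by $p$; with that correction your argument closes and is identical to the paper's proof.
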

\begin{proof}
The left-hand side inequality in \eqref{eq.log-momment1}, follows after the application 
of theorem \ref{thm.CDP} in the special case describing in lemma \ref{lemma.multi-rank}.
Note that the assumption that $f$ is log-concave is not needed here. This inequality 
holds for any measurable function $f$. To make this more precise, the following simple 
remark is helpful.

\medskip

\begin{remark}
 Let $t\in[-\frac{1}{n-1},1]$ and $X_1,\ldots,X_n$ be standard Gaussian random vectors in 
 ${\mathbb R}^k$ satisfying the condition \eqref{eq.t-cor-k} of lemma \ref{lemma.multi-rank}. 
 Thus ${\bf X}:=(X_1,\ldots,X_n)$, is a centered Gaussian vector in ${\mathbb R}^{kn}$ with 
 covariance matrix $T=[T_{i,j}]_{i,j\leq n}$, with block entries the $k\times k$ matrices 
 $T_{ii}=I_k$ for every $i=1,\ldots,n$, and $T_{ij}=tI_k$, for $i\neq j$. If we set
 \[
  p:= (n-1)t+1\quad{\rm and}\quad q:=1-t,  
 \]
 then it's not hard to check that, for $t \leq 0$
 $q$ is the biggest and  $p$ is the smallest singular value of $T$. On the other hand, 
 if $t\geq 0$ then, $p$ is the biggest singular value of $T$ and $q$ is the smallest one. 
 Thus we have that
 \begin{itemize}
 \item[(i)]  if $t \geq 0$ then
             \[
              qI_{kn} \leq T \leq pI_{kn}
             \]
 \item[(ii)] if $t\leq 0$ then
             \[
              pI_{kn} \leq T \leq qI_{kn}
             \] 
\end{itemize}
\end{remark}

\noindent Thus, in the above situation, theorem \ref{thm.CDP} reads as follows: 
 
\begin{theorem}\label{thm.CDP-t}
 Let $k,n\in{\mathbb N}$, $t\in[-\frac{1}{n-1},1]$ and let $X_1,\ldots,X_n$ be 
 standard Gaussian random vectors in ${\mathbb R}^k$, with 
 ${\mathbb E} [X_i\otimes X_j^*]=tI_k$, for all $i\neq j$. Setting $p:=(n-1)t+1$ and $q:=1-t$,
 we have that for every set of measurable functions $f_i:{\mathbb R}^k \rightarrow [0,+\infty)$, 
 $i=1,\ldots,n$, 
 \begin{itemize}
  \item[(i)] if $t\in [0,1]$, then
        \begin{equation} \label{eq.t-pos}
          \prod_{i=1}^n\Big(\IE f_i(X_i)^{q}\Big)^{1/q}
          \leq\; \IE\prod_{i=1}^n f_i(X_i) \leq
          \prod_{i=1}^n\Big(\IE f_i(X_i)^p\Big)^{1/p},
        \end{equation}
  \item[(ii)] if $t\in [-\frac{1}{n-1}, 0]$, then
        \begin{equation} \label{eq.t-neg}
          \prod_{i=1}^n\Big(\IE f_i(X_i)^p\Big)^{1/p}
          \leq \IE\prod_{i=1}^n f_i(X_i) \leq
          \prod_{i=1}^n\Big(\IE f_i(X_i)^{q}\Big)^{1/q} 
        \end{equation}
 \end{itemize}
\end{theorem}

Now, the left-hand side inequality of \eqref{eq.log-momment1}, follows immediately from \eqref{eq.t-pos},
by taking $f_i=f$ for every $i=1,\ldots,n$.

\smallskip

In order to prove the right-hand side inequality of \eqref{eq.log-momment1}, we apply 
Barthe's theorem, using the decomposition of the identity \eqref{eq.pos-dec-k}. 
To do so we first state, in the following lemma, some technical details we are going to need. 

\begin{lemma}\label{lemma.tech-1}
 Let $U_i$ and $E_i$, $i=1,\ldots,n$ the matrices defined in 
 \eqref{eq.Ui} and \eqref{eq.Ei}, and set $p=(n-1)t+1$, $q=1-t$. Then
 \begin{eqnarray*}
  U_i^* &=& \sqrt{\frac{p}{n}}\,e_n\otimes I_k 
           + \sqrt{\frac{n-1}{n}q}\,{\rm v}_i\otimes I_k 
           \; \in {\mathbb R}^{kn \times k}. \\
  && \\
  U_i U_j^* &=& \langle u_i, u_j \rangle I_k \\
  && \\
  U_iE_j^* &=& \sqrt{\frac{n-1}{n}q}\,\langle {\rm v}_i, e_j \rangle I_k
 \end{eqnarray*}
 for every $i\leq n$ and $j\leq n-1$.
\end{lemma}
\begin{proof}
 The first and the second can be verified after some obvious and trivial computations. 
 For the third one, we have
 \begin{eqnarray*}
  U_iE_j^* 
  &=& (u_i^*\otimes I_k) (e_j^*\otimes I_k)^* \\
  &=& \left(\sqrt{\frac{p}{n}}\,e_n^*\otimes I_k 
     + \sqrt{\frac{n-1}{n}q}\,{\rm v}_i^*\otimes I_k\right) (e_j\otimes I_k) \\
  &=& \sqrt{\frac{p}{n}}\,(e_n^*\otimes I_k) (e_j\otimes I_k) 
     + \sqrt{\frac{n-1}{n}q}\,({\rm v}_i^*\otimes I_k) (e_j\otimes I_k) \\
  &=& \sqrt{\frac{p}{n}}\;e_n^*e_j\otimes I_k 
     + \sqrt{\frac{n-1}{n}q}\;{\rm v}_i^*e_j\otimes I_k \\
  &=& \sqrt{\frac{p}{n}}\;\langle e_n,e_j\rangle I_k 
     + \sqrt{\frac{n-1}{n}q}\;\langle {\rm v}_i,e_j\rangle I_k \\
  &=& {\mathbb O} + \sqrt{\frac{n-1}{n}q}\;\langle {\rm v}_i,e_j\rangle I_k.
 \end{eqnarray*}
\end{proof}

\medskip

\noindent To this end, we will apply Barthe's theorem \ref{thm.Barthe}, using the decomposition 
of the identity appearing in \eqref{eq.pos-dec-k}. More precisely, we choose the parameters: 
$n \leftrightarrow kn$, $m := 2n-1$, $n_i := k$ for all $i=1,\ldots,2n-1$, and
\[
 c_i:=\left\{\begin{array}{ccl}
              \frac{1}{p} & , & i=1,\ldots,n \\
             \frac{nt}{p} & , & i=n+1,\ldots,2n-1
            \end{array}\right. 
\]
and we apply theorem \ref{thm.Barthe} to the functions,
\[
 {\tilde f}_i(x):=\left\{\begin{array}{ccl}
                    f(x)^\frac{p}{n} & , & i=1,\ldots,n \\
                            1        & , & i=n+1,\ldots,2n-1
            \end{array}\right., \quad x\in{\mathbb R}^k
\]
and
\[
 h(x):= f\left(\frac1n\sum_{i=1}^nU_ix\right),\quad x\in{\mathbb R}^{kn}.
\]

\noindent Note then that under lemma \ref{lemma.tech-1}, we have that
for every $\xi_j\in{\mathbb R}^k$, $j=1,\ldots,n$,
\begin{eqnarray*}
  && h\left(\sum_{j=1}^n\frac1pU_j^*\xi_j + \sum_{a=1}^{n-1}\frac{nt}{p}E_a^*\xi_{n+a}\right)  \\
  &=& f\left(\frac1n\sum_{i=1}^n \sum_{j=1}^n \frac1p U_i U_j^* \xi_j 
      + \frac1n\sum_{i=1}^n \sum_{a=1}^{n-1} \frac{nt}{p} U_i E_a^* \xi_{n+a}\right) \\
  &=& f\left(\frac1n\sum_{i=1}^n \sum_{j=1}^n \frac1p U_i U_j^* \xi_j 
      + \frac1n\sum_{i=1}^n \sum_{a=1}^{n-1} 
        \frac{nt}{p} \sqrt{\frac{n-1}{n}q}\langle {\rm v}_i ,e_a \rangle \xi_{n+a}\right) \\ 
  &=& f\left(\frac1n\sum_{i=1}^n \sum_{j=1}^n \frac1p U_i U_j^* \xi_j \right) 
  \qquad \left({\rm since} \;\; \sum {\rm v}_i=0 \right) \\ 
  &=& f\left(\frac1n\sum_{i=1}^n \sum_{j=1}^n \frac1p \langle u_i , u_j \rangle \xi_j \right) \\
  &=& f\left(\frac1n\sum_{i=1}^n \Big(\frac1p \xi_i + \sum_{j\neq i} \frac{t}{p} \xi_j \Big) \right) \\ 
  &=& f\left(\frac1n\sum_{i=1}^n \Big(\frac1p + (n-1) \frac{t}{p} \Big) \xi_i \right) \\ 
  &=& f\left(\frac1n\sum_{i=1}^n \xi_i \right) 
  \geq \prod_{i=1}^n f(\xi_i)^\frac{1}{n} = \prod_{i=1}^n \Big(f(\xi_i)^\frac{p}{n}\Big)^\frac1p
  = \prod_{i=1}^n {\tilde f}(\xi_i)^{c_i}
\end{eqnarray*}
Thus, theorem \ref{thm.Barthe} gives that
\begin{equation}\label{eq.*}
  {\mathbb E} f\left(\frac1n\sum_{i=1}^n X_i\right) 
  = {\mathbb E} f\left(\frac1n\sum_{i=1}^n U_iZ \right)
  \geq \prod_{i=1}^n\left({\mathbb E}f(X_i)^\frac{p}{n}\right)^\frac{1}{p}
  =\left({\mathbb E}f(X)^\frac{p}{n}\right)^\frac{n}{p} 
\end{equation}
and the proof is complete
\end{proof}


\begin{proof}[Proof of theorem \ref{thm.sqrt-moments}.]
Suppose first that $X \sim N(0,I_k)$. Then, under the notation of lemma \ref{lemma.multi-rank} 
we have that
\begin{align*}
 \frac1n\sum_{i=1}^n U_i {\bf Z} 
 &= \frac1n\sum_{i=1}^n \sqrt{\frac{p}{n}}\,(e_n^*\otimes I_k){\bf Z} \;+\; 
    \frac1n\sum_{i=1}^n \sqrt{\frac{n-1}{n}q}\,({\rm v}_i^*\otimes I_k){\bf Z} \\
 &= \sqrt{\frac{p}{n}}\,(e_n^*\otimes I_k){\bf Z} \;+\; 
    \frac1n \sqrt{\frac{n-1}{n}q}\,\left(\sum_{i=1}^n {\rm v}_i^*\right)\otimes I_k\,{\bf Z} \\
 &= \sqrt{\frac{p}{n}}\,E_n {\bf Z} \;+\; 
    \frac1n \sqrt{\frac{n-1}{n}q}\,\left(\sum_{i=1}^n {\rm v}_i\right)^*\otimes I_k\,{\bf Z} \\
 &= \sqrt{\frac{p}{n}}\,Z_n.
\end{align*}
Thus, the right hand side of \eqref{eq.log-momment1} can be written as 
\begin{equation}\label{eq.sqrt-p/n}
 {\mathbb E}f\left(\sqrt{\frac{p}{n}}\,X\right)
 \geq \left(f(X)^\frac{p}{n}\right)^\frac{n}{p}.
\end{equation}
where $p=(n-1)t+1$, $n\in{\mathbb N}$, and $t\in [0,1]$.  

\medskip

Consequently, if $f:{\mathbb R}^k\rightarrow [0,+\infty)$ is a log-concave function 
and $r\in(0,1]$, then there exist, $t\in[0,1]$ and $n\in{\mathbb N}$, such that 
$r=\frac{p}{n}=\frac{(n-1)t+1}{n}$, and so by \eqref{eq.sqrt-p/n} we get that
\begin{equation}\label{eq.moment3}
 {\mathbb E} f\big(\sqrt{r}X\big) \geq \left({\mathbb E} f(X)^r\right)^\frac1r
\end{equation}
for every $r\in(0,1]$. We deal independently with the case where $r=0$. Since $f$ is 
$log$-concave, there exists a convex function $v:{\mathbb R}^k\mapsto{\mathbb R}$, such that 
$f= e^{-v}$. Then for $r=0$, inequality \eqref{eq.sqrt-moments-1} is equivalent to Jensen's
inequality
\begin{equation}\label{eq.moment4}
 v(0)=v({\mathbb E}X) \leq {\mathbb E}v(X),
\end{equation} 
and the proof of \eqref{eq.sqrt-moments-1} is now complete.

\medskip

For every $q \geq 1$ consider $r=\frac{1}{q}\in(0,1]$. Let 
 $F(x)=f(x/\sqrt{r})^{1/r}$ which is also $\log$-concave and so \eqref{eq.moment3} 
 for $F$ and $r$ implies
 \begin{equation} \label{eq.moment.5}
  {\mathbb E} f( X)^q \geq \big({\mathbb E} f(\sqrt{q} X)\big)^q,
 \end{equation}
and \eqref{eq.sqrt-moments-2} follows.

\medskip

\noindent Assume now that $g:\mathbb R^{n}\rightarrow[0,+\infty)$ is $\log$-convex and $r\in(0,1]$.
By the log-convexity of $g$ and theorem \ref{thm.CDP-t}(i), we have that 
\begin{equation}\label{eq.moment.6}
 \mathbb E g\left(\frac{1}{n}\sum_{i=1}^{n}X_{i}\right) \leq 
 \mathbb E \prod_{i=1}^{n}g(X_{i})^{\frac{1}{n}}  \leq 
 \left( \mathbb E g(Z)^{\frac{p}{n}} \right)^{\frac{n}{p}} .
\end{equation}
As we have seen at the beginning of the proof, we have that 
$\frac1n\sum_{i=1}^n X_i  \sim \sqrt{\frac{p}{n}}\,X$. So, using \eqref{eq.moment.6} 
for $t\in[0,1]$ and $n\in{\mathbb N}$ such that $\frac{p}{n}=\frac{(n-1)t+1}{n}=r$, we derive that 
\[ 
 \mathbb E g\left( \sqrt{r} Z\right) \leq  \left( \mathbb  Eg(Z)^r \right)^{\frac1r}. 
\]
for every $r\in (0,1]$. The rest of the proof for a $\log$-convex function $g$ 
is identical to the $\log$-concave one. 

Finally for the equality case, 
a straightforward computation shows that for $f(x)= e^{\langle {\rm a},x\rangle+c}$, we have that
\[
 {\mathbb E} f(\sqrt{q}X)= 
 C \exp\left(\frac{q}{2}|{\rm a}|^2\right)
 =\big({\mathbb E}f(X)^q\big)^\frac{1}{q}. 
\]
for every $q \geq 0$.

\medskip

\noindent At the end, suppose that $X$ is a general Gaussian random vector in ${\mathbb R}^k$ with 
expectation $\xi\in{\mathbb R}^k$ and covariance matrix $T=UU^*$ where $U\in{\mathbb R}^{k\times k}$. 
Note, that if $f$ is a log-concave (or log-convex) and positive function on ${\mathbb R}^k$, then so is 
$F(x):=f(Ux-\xi)$. Moreover, if $Z\sim N(0,I_k)$ then $UZ-\xi \stackrel{d}{=} X \sim N(0,T)$. Thus, 
we get the general theorem by applying the previous case with function $F$.

\end{proof}

\section {Entropy Inequalities - Stability in Log-Sobolev} \label{sec.entropy}

\begin{proposition}\label{cor-main1}
 Let $X$ be a Gaussian random vector in ${\mathbb R}^k$, and $f:{\mathbb R}^k\rightarrow [0,+\infty)$. 
 Then,
 \begin{itemize}
  \item [(i)]  if $f$ is log-concave, then
               \begin{equation}\label{eq.cor-main-1}
                {\rm Ent}_X(f) \geq \frac12{\mathbb E}\langle X ,\nabla f(X) \rangle 
               \end{equation}
  \item [(ii)] if $f$ is log-convex, then
               \begin{equation}\label{eq.cor-main-1b}
                {\rm Ent}_X(f) \leq \frac12{\mathbb E}\langle X ,\nabla f(X) \rangle 
               \end{equation}
 \end{itemize}
In any case, one has equality when $f(x)= \exp\big(\langle {\rm a},x\rangle+c\big)$, 
${\rm a}\in{\mathbb R}^k$, $c\in{\mathbb R}$. 
\end{proposition}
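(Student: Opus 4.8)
The plan is to derive Proposition~\ref{cor-main1} by differentiating the moment inequality of Theorem~\ref{thm.sqrt-moments} at the endpoint of the parameter range. Concretely, for a log-concave $f$ consider the function
\[
 \Phi(r) := \left({\mathbb E}f(X)^r\right)^{\frac1r}, \qquad r\in(0,1],
\]
and the function $\Psi(r):={\mathbb E}f(\sqrt r\,X)$. Theorem~\ref{thm.sqrt-moments}(i) states $\Psi(r)\geq\Phi(r)$ for all $r\in[0,1]$, while at $r=1$ both equal ${\mathbb E}f(X)$. Hence, if both functions are differentiable at $r=1$ from the left, we must have $\Psi'(1)\leq\Phi'(1)$ (the graph of $\Psi$ lies above that of $\Phi$ and they agree at the right endpoint). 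Computing these two one-sided derivatives and rearranging should give exactly \eqref{eq.cor-main-1}; for log-convex $f$ the inequality in Theorem~\ref{thm.sqrt-moments} reverses, so the derivative comparison reverses and yields \eqref{eq.cor-main-1b}.

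The two derivative computations are the routine core. For $\Phi$: writing $M(r)={\mathbb E}f(X)^r={\mathbb E}e^{r\log f(X)}$, one has $\log\Phi(r)=\frac1r\log M(r)$, so
\[
 \frac{\Phi'(1)}{\Phi(1)} = -\log M(1) + \frac{M'(1)}{M(1)}
 = -\log{\mathbb E}f(X) + \frac{{\mathbb E}\big(f(X)\log f(X)\big)}{{\mathbb E}f(X)},
\]
and multiplying by $\Phi(1)={\mathbb E}f(X)$ gives $\Phi'(1)={\rm Ent}_X(f)$ (with the convention $0\log 0=0$, matching the entropy definition in the paper). For $\Psi$: since $X$ and $\sqrt r\,X$ have the same law only at $r=1$, write $\Psi(r)={\mathbb E}f(\sqrt r\,X)$ and differentiate under the integral sign, $\Psi'(r)={\mathbb E}\big\langle\nabla f(\sqrt r\,X),\tfrac1{2\sqrt r}X\big\rangle$, so $\Psi'(1)=\tfrac12{\mathbb E}\langle X,\nabla f(X)\rangle$. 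Plugging into $\Psi'(1)\leq\Phi'(1)$ yields \eqref{eq.cor-main-1} directly. The equality case $f(x)=e^{\langle{\rm a},x\rangle+c}$ follows since there Theorem~\ref{thm.sqrt-moments} holds with equality identically in $r$, forcing $\Psi'(1)=\Phi'(1)$; alternatively it can be checked by the same explicit Gaussian computation used at the end of the proof of Theorem~\ref{thm.sqrt-moments}.

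The main obstacle is justifying the analytic manipulations: differentiation under the integral sign for both $M(r)$ and $\Psi(r)$, finiteness of the quantities ${\mathbb E}\big(f(X)\log f(X)\big)$ and ${\mathbb E}\langle X,\nabla f(X)\rangle$, and the legitimacy of passing from the pointwise inequality $\Psi\geq\Phi$ with equality at $r=1$ to the one-sided derivative inequality. For the derivative comparison itself the argument is soft: $h(r):=\Psi(r)-\Phi(r)\geq0$ on $(0,1]$ with $h(1)=0$ implies $h$ is nonincreasing near $1$, hence $h'(1^-)\leq0$, which is all that is needed; no two-sided differentiability is required. The integrability issues can be handled by a standard truncation/approximation argument — first prove the inequality for $f$ bounded with bounded support (or for $f$ that is bounded away from $0$ on a large ball and with a Gaussian-type tail), where all the expectations are manifestly finite and differentiation under the integral is immediate by dominated convergence, and then remove the restriction by monotone/dominated convergence, using log-concavity (resp.\ log-convexity) of $f$ to control the relevant tails; if either side of \eqref{eq.cor-main-1}–\eqref{eq.cor-main-1b} is infinite the inequality is either trivial or can be read as a statement about the finite side. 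I would also remark that one may equivalently deduce the result by differentiating at $q=1$ the second inequality \eqref{eq.sqrt-moments-2} instead, which gives the same identity with the roles of the one-sided limits swapped, providing a consistency check.
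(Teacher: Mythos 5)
Your proposal is correct and follows essentially the same route as the paper: the authors also set $M(q)=\big({\mathbb E}f(X)^q\big)^{1/q}$ and $H(q)={\mathbb E}f(\sqrt{q}X)$, observe $M(1)=H(1)$, compute $M'(1)={\rm Ent}_X(f)$ and $H'(1)=\tfrac12{\mathbb E}\langle X,\nabla f(X)\rangle$, and read off the inequality from Theorem \ref{thm.sqrt-moments}. Your version merely spells out the one-sided derivative comparison and the integrability/truncation caveats that the paper leaves implicit.
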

\begin{proof}
Let $M(q):= \big({\mathbb E}f(X)^q \big)^\frac1q$ and $H(q):= {\mathbb E}f(\sqrt{q}X)$. 
Then we have that 
\[
 M(1) = {\mathbb E}f(X) = H(1),
 \;\;
 M^\prime(1) = {\rm Ent}_X(f)
 \;\; {\rm and} \;\;
 H^\prime(1) = \frac12 {\mathbb E} \langle X,\nabla f(X)\rangle.
\]
Thus, Theorem \ref{thm.sqrt-moments} immediately implies the desired result.
\end{proof}

\medskip


\noindent Gaussian random vectors have a special property: the  
\textit{Gaussian Integration by Parts} formula, which we state in the next lemma 
(see \cite[Appendix 4]{Tal1} for a simple proof). 

\begin{lemma}\label{IbP}
 Let $X,Y_1,\ldots,Y_n$ be centered jointly Gaussian random variables, and $F$ 
 be a real valued function on ${\mathbb R}^n$, that satisfy the growth condition
 \begin{equation}\label{eq.growth}
  \lim_{|x|\rightarrow\infty} |F(x)|\exp\left(-a |x|^2\right)=0 \qquad \forall\,a>0.
 \end{equation}
 Then
 \begin{equation}
 {\mathbb E} XF(Y_1,\ldots,Y_n) = 
 \sum_{i=1}^n {\mathbb E} XY_i \; {\mathbb E} \partial_iF(Y_1,\ldots,Y_n).
\end{equation}
\end{lemma}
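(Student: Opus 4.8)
The statement to prove is Lemma~\ref{IbP}, the Gaussian integration by parts formula.

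\medskip

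\noindent\textbf{Proof proposal.} The plan is to reduce to the one-dimensional Gaussian identity $\mathbb E\, g'(G) = \mathbb E\, G\, g(G)$ for $G\sim N(0,1)$, which itself follows from integrating by parts against the density $\varphi(x)=(2\pi)^{-1/2}e^{-x^2/2}$, using that $\varphi'(x)=-x\varphi(x)$ and that the growth condition \eqref{eq.growth} kills the boundary terms at $\pm\infty$. The key structural fact is that one may decompose $X$ along the Gaussian family: write $X = \sum_{i=1}^n a_i Y_i + W$ where the coefficients $a_i$ are chosen so that $W$ is independent of $(Y_1,\dots,Y_n)$. Concretely, let $\Sigma$ be the covariance matrix of $(Y_1,\dots,Y_n)$; choosing the vector $a=(a_1,\dots,a_n)$ to solve $\Sigma a = b$ with $b_i = \mathbb E X Y_i$ makes $\mathbb E\big[(X-\sum a_i Y_i)Y_j\big]=0$ for all $j$, hence (jointly Gaussian) $W:=X-\sum a_i Y_i$ is independent of the vector $(Y_1,\dots,Y_n)$. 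If $\Sigma$ is singular one first restricts to a maximal linearly independent subcollection of the $Y_i$'s, or perturbs and passes to the limit; this degenerate case is a minor technical point.

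\medskip

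\noindent Granting the decomposition, I would compute
\[
\mathbb E\, X F(Y_1,\dots,Y_n) = \mathbb E\Big[\Big(\sum_{i=1}^n a_i Y_i\Big) F(Y)\Big] + \mathbb E\big[W F(Y)\big],
\]
and the second term vanishes because $W$ is centered and independent of $Y=(Y_1,\dots,Y_n)$, so $\mathbb E[WF(Y)] = \mathbb E W\,\mathbb E F(Y)=0$. For the first term, diagonalize: pick an orthogonal change of variables turning $Y$ into independent standard Gaussians (after whitening by $\Sigma^{1/2}$), apply the one-dimensional identity coordinatewise to handle each $\mathbb E[Y_i F(Y)]$, and translate back. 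The cleanest bookkeeping is: in the whitened coordinates $Y = \Sigma^{1/2} G$ with $G\sim N(0,I_n)$, one has $\mathbb E[G_k\, \Phi(G)] = \mathbb E[\partial_k \Phi(G)]$ for each $k$ (the standard Gaussian IBP in one variable applied to the $k$-th coordinate with the others frozen, legitimate under \eqref{eq.growth} since the marginal densities decay faster than any $e^{a x^2}$ grows), and then the chain rule $\partial_k\big(F(\Sigma^{1/2}G)\big) = \sum_i (\Sigma^{1/2})_{ik}\,\partial_i F(\Sigma^{1/2}G)$ reassembles the sum. Collecting terms gives $\mathbb E[\sum_i a_i Y_i F(Y)] = \sum_i b_i\,\mathbb E[\partial_i F(Y)] = \sum_i \mathbb E[XY_i]\,\mathbb E[\partial_i F(Y)]$, which is exactly the claimed formula.

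\medskip

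\noindent The main obstacle is purely a regularity/integrability matter rather than a conceptual one: one must justify the integration by parts, i.e.\ that no boundary contribution survives and that all the expectations appearing are finite. This is where the growth hypothesis \eqref{eq.growth} enters — it ensures $|F(x)|\varphi(x)\to 0$ at infinity and that $F$ together with its relevant first derivatives are integrable against the Gaussian weight, so Fubini and the fundamental theorem of calculus may be applied freely. If one does not want to assume differentiability of $F$ a priori, the standard route is to prove the identity first for smooth compactly supported $F$ and then extend by a density/approximation argument (mollify and truncate, controlling the error via \eqref{eq.growth}); since the excerpt cites \cite[Appendix 4]{Tal1} for a simple proof, it suffices to sketch this reduction. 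I expect the write-up to be short: state the decomposition $X = \sum a_i Y_i + W$, dispose of the $W$ term by independence, and invoke the scalar Gaussian IBP in the whitened coordinates.
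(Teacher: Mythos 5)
The paper offers no proof of this lemma at all --- it simply cites \cite[Appendix 4]{Tal1} --- so there is nothing internal to compare against; your argument is the standard one (and essentially the one in Talagrand): decompose $X=\sum_i a_iY_i+W$ with $\Sigma a=b$, $b_i=\mathbb{E}XY_i$, kill the $W$ term by independence, and reduce to the scalar Gaussian integration by parts in whitened coordinates, where the chain rule reassembles $\sum_k(\Sigma^{1/2})_{jk}(\Sigma^{1/2})_{ik}=\Sigma_{ji}$ and hence $\sum_i(\Sigma a)_i\,\mathbb{E}\partial_iF=\sum_i\mathbb{E}XY_i\,\mathbb{E}\partial_iF$. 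The computation is correct, and you rightly flag the only loose end, namely that the growth condition \eqref{eq.growth} as stated controls $F$ but one also needs integrability of the first derivatives against the Gaussian weight (a hypothesis the paper's statement leaves implicit); nothing further is needed.
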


\noindent Involving the Gaussian Integration by Parts formula, we can further elaborate 
proposition \ref{cor-main1} in order to prove theorem \ref{thm.Stab-Log-Sob}.

More precisely, let 
${\cal G}_k$, be the class all the functions in ${\mathbb R}^k$, such that their first 
derivatives satisfy the growth condition \eqref{eq.growth}. Then for any $f\in{\cal G}_k$, 
lemma \ref{IbP} implies that
\begin{eqnarray*}
 {\mathbb E}\langle X , \nabla f(X) \rangle 
 &=& \sum_{i=1}^k {\mathbb E}X_i\partial_if(X)  \\
 &=& \sum_{i=1}^k\sum_{j=1}^k {\mathbb E}X_iX_j\,{\mathbb E}\partial_{ij}f(X)
 = {\mathbb E}\;{\rm tr}\big(TH_f(X)\big).
\end{eqnarray*}
where, $H_f(x)$ stands for the Hessian matrix of $f$ at $x\in{\mathbb R}^k$. In the special 
case where $X\sim N(0,I_k)$, we have proved the following

\begin{corollary}\label{cor-main2}
 Let $k\in{\mathbb N}$, and $X$ be a standard Gaussian vector in ${\mathbb R}^k$. Then
 \begin{itemize}
  \item [(i)]  for every log-concave function $f\in{\cal G}_k$ we have that
               \begin{equation}\label{eq.cor-main-2}
                {\rm Ent}_X(f) \geq 
                \frac12 {\mathbb E} \Delta f(X),
               \end{equation}
  \item [(ii)] for every log-convex function $f\in{\cal G}_k$ we have that
               \begin{equation}\label{eq.cor-main-2b}
                {\rm Ent}_X(f) \leq 
                \frac12 {\mathbb E} \Delta f(X).
               \end{equation}
 \end{itemize}
\end{corollary}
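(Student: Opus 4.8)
The plan is to derive Corollary~\ref{cor-main2} directly from Proposition~\ref{cor-main1} together with the Gaussian integration by parts computation already carried out in the text, simply by specializing the covariance matrix to the identity. First I would recall that Proposition~\ref{cor-main1} gives, for a log-concave $f$, the bound ${\rm Ent}_X(f) \geq \frac12 {\mathbb E}\langle X,\nabla f(X)\rangle$, and the reversed inequality for log-convex $f$; this holds for an arbitrary Gaussian vector $X$ and in particular for $X\sim N(0,I_k)$. The only remaining task is to rewrite the right-hand side $\frac12{\mathbb E}\langle X,\nabla f(X)\rangle$ in terms of the Laplacian.

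Next I would invoke Lemma~\ref{IbP}, the Gaussian integration by parts formula, exactly as displayed just before the corollary: for $f\in{\cal G}_k$ (so that the first partial derivatives satisfy the growth condition~\eqref{eq.growth} required by Lemma~\ref{IbP}), one has
\[
 {\mathbb E}\langle X,\nabla f(X)\rangle
 = \sum_{i=1}^k {\mathbb E} X_i\,\partial_i f(X)
 = \sum_{i=1}^k\sum_{j=1}^k {\mathbb E} X_iX_j\;{\mathbb E}\partial_{ij}f(X)
 = {\mathbb E}\,{\rm tr}\big(T H_f(X)\big),
\]
where $T$ is the covariance matrix of $X$ and $H_f$ the Hessian of $f$. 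When $X\sim N(0,I_k)$ we have $T=I_k$, so ${\rm tr}\big(T H_f(X)\big)={\rm tr}\,H_f(X)=\Delta f(X)$, and therefore $\frac12{\mathbb E}\langle X,\nabla f(X)\rangle = \frac12{\mathbb E}\Delta f(X)$. Substituting this identity into the two inequalities of Proposition~\ref{cor-main1} yields \eqref{eq.cor-main-2} and \eqref{eq.cor-main-2b} respectively.

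One small point I would be careful about is the hypothesis needed to apply Lemma~\ref{IbP}: the lemma as stated asks that the function whose expectation against $X_i$ is taken satisfy the growth condition, and here that function is $\partial_i f$, so membership of $f$ in ${\cal G}_k$ is precisely what makes the two applications of the formula (first to pull $X_i$ inside, then again with $\partial_i f$ in the role of $F$ and its own gradient) legitimate; I would also note in passing that ${\cal G}_k$ contains the functions of class ${\cal L}^{2,1}(\gamma_k)$ that appear in Theorem~\ref{thm.Stab-Log-Sob}, which is why this corollary feeds into that theorem. Beyond this bookkeeping there is no genuine obstacle: the corollary is an immediate consequence of Proposition~\ref{cor-main1} and the trace identity, and in fact the text has already written out the computation, so the proof amounts to assembling these two ingredients and reading off the equality case $f(x)=\exp(\langle {\rm a},x\rangle+c)$ from the corresponding equality case in Proposition~\ref{cor-main1}.
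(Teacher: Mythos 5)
Your proposal is correct and follows exactly the paper's own route: Proposition~\ref{cor-main1} supplies ${\rm Ent}_X(f)\gtrless\frac12{\mathbb E}\langle X,\nabla f(X)\rangle$, and the displayed Gaussian integration-by-parts computation turns the right-hand side into $\frac12{\mathbb E}\,{\rm tr}(TH_f(X))=\frac12{\mathbb E}\Delta f(X)$ when $T=I_k$. The only slight imprecision is your mention of ``two applications'' of Lemma~\ref{IbP}: writing ${\mathbb E}\langle X,\nabla f(X)\rangle=\sum_i{\mathbb E}X_i\partial_i f(X)$ is just linearity, and the lemma is then applied once per term with $F=\partial_i f$, which is precisely why $f\in{\cal G}_k$ (first derivatives satisfying the growth condition) is the right hypothesis.
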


\begin{proof}[Proof of Theorem \ref{thm.Stab-Log-Sob}]
Let $f\in {\cal L}^{2,1}(\gamma_k)$, and without loss of of generality we may also assume 
that ${\mathbb E}f^2(X)=1$. Suppose first that $f$ has a bounded support. Then $f^2\in{\cal G}_k$,
and so Corollary \ref{cor-main2}, after an application of the chain rule 
$\frac12 \Delta f^2 = |\nabla f|^2 + f\Delta f$, gives that 
\begin{equation}\label{eq.stab-L-S}
 {\mathbb E}|\nabla f(X)|^2 + {\mathbb E}f(X)\Delta f(X) 
 \leq {\rm Ent}_X(f^2) \leq 2\,{\mathbb E} |\nabla f(X)|^2
\end{equation}

%
%
Finally, for $f=e^{-v}$, where $v: supp(f)\rightarrow{\mathbb R}$ is a convex 
function, and by another application of the chain rule:
\[
 f\Delta f = f^2  |\nabla v|^2 - f^2 \Delta v = |\nabla f|^2 - f^2 \Delta v,
\]
we get that 
\begin{equation} \label{eq.chainRule}
 {\mathbb E}f(X)\Delta f(X) = {\mathbb E} |\nabla f(X)|^2 - {\mathbb E}f(X)^2 \Delta v(X).
\end{equation}
Equation \eqref{eq.stab-L-S} combined with \eqref{eq.chainRule}, proves theorem \ref{thm.Stab-Log-Sob}
in this case.

\medskip

In order to drop the assumption of the  bounded support, we proceed with a standard 
approximation argument. We consider the functions $f_n:=f\,{\bf 1}_{nB_2^k}$, where 
${\bf 1}_{nB_2^k}$ is the indicator function of the Euclidean Ball in ${\mathbb R}^k$ 
with radius $n\in{\mathbb N}$. Then, every $f_n$ has bounded support and we also have that 
$0 \leq f_n \nearrow f$, $0\leq  |\nabla f_n|^2 \nearrow |\nabla f|^2$, and 
$0 \leq f_n^2\Delta v_n \nearrow f^2\Delta v$. Thus by the monotone convergence theorem 
we have
\begin{equation}\label{eq.mct_1}
 {\mathbb E} |\nabla f_n(X)|^2 \longrightarrow {\mathbb E} |\nabla f(X)|^2 < \infty
\end{equation}
and
\begin{equation}\label{eq.mct_2}
 {\mathbb E} f_n(X)^2\Delta v_n(X) \longrightarrow {\mathbb E} f(X)^2 \Delta v(X)
\end{equation}

Moreover, $f_n^2 \log f_n^2 \rightarrow f^2 \log f^2$ and $|f_n^2 \log f_n^2| \leq |f^2 \log f^2|$, 
for every $n\in {\mathbb N}$ (where we have taken that $0\log 0 =0$). By Gross' inequality 
$|f^2 \log f^2|\in L^1(\gamma_k)$, and so after applying the Lebesgue's dominated convergence theorem 
we also get
\begin{equation}\label{eq.dct}
 {\rm Ent}_X(f_n^2) \longrightarrow {\rm Ent}_X(f^2). 
\end{equation}
 Since equation \eqref{eq.quan-stab-L-S} holds true for every $f_n$, we pass to the limit using 
\eqref{eq.mct_1}, \eqref{eq.mct_2} and \eqref{eq.dct}, and we get that \eqref{eq.quan-stab-L-S} 
is also true for $f$. The proof is complete.
\end{proof}

\bigskip

\footnotesize

\bigskip

\noindent Nikos Dafnis  \\
Department of Mathematics  \\
Technion - Israel Institute of Technology \\
Haifa 32000, Israel. \\
nikdafnis@gmail.com         

\medskip

\noindent Grigoris Paouris \\
Department of Mathematics \\
Texas A\&M University \\
College Station, TX 77843, USA. \\
grigorios.paouris@gmail.com

\end{document}